\newtheorem{Pa}{Paper}[section]
\newtheorem{theorem}[Pa]{{\bf Theorem}}
\newtheorem{lem}[Pa]{{\bf Lemma}}
\newtheorem{Rk}[Pa]{{\bf Remark}}
\newtheorem{prop}[Pa]{{\bf Proposition}}
\newtheorem{Ex}[Pa]{{\bf Example}}
\def\R{\mathbb R}
\def\(s){\mathscr S(\R^2)}
\title[]
{Wiener-Chaos approach to optimal prediction}
\author{Daniel Alpay}
\address{(DA) Department of Mathematics
\newline
Ben Gurion University of the Negev \newline P.O.B. 653,
\newline
Be'er Sheva 84105, \newline ISRAEL}
\email{dany@math.bgu.ac.il}
\author{Alon Kipnis}
\address{(AK) Department of Electrical Engineering
\newline
Stanford University \newline
Stanford, California \newline }
\email{kipnisal@stanford.edu}
\keywords{}
\subjclass{}
\thanks{D. Alpay thanks the
Earl Katz family for endowing the chair which supported his
research}
\begin{document}

\begin{abstract}
The chaos expansion of a general non-linear function of a Gaussian stationary increment process conditioned on its past realizations is derived. This work combines Wiener chaos expansion approach to study the dynamics of a stochastic system with the classical problem of the prediction of a Gaussian process based on a realization of its past. This is done by constructing a special basis for the Fock space of the Gaussian space generated by the process, such that each basis element is either measurable or independent with respect to the given samples. This property of the basis allows us to derive the chaos expansion of a random variable conditioned on part of the sample path. We provide a general method for the construction of such basis when the underlying process is Gaussian with stationary increment. We evaluate the basis elements in the case of the fractional Brownian motion, which leads to a prediction formula for this process.
\end{abstract}

\maketitle

\keywords{Wiener chaos, prediction, stationary increment processes, fractional Brownian motion}

\section{Introduction}
\setcounter{equation}{0}
The Wiener chaos expansion has gained popularity in the recent years as a tool to study the dynamic of stochastic systems \cite{new_sde, Kah00, Hou06wienerchaos, briand2014simulation, lototsky2006stochastic}. In this approach, the randomness is due to a fundamental set of Gaussian random variables and the probability space is decomposed into a direct sum of spaces spanned by polynomials in these Gaussian elements. \par
Our starting point is a second order Gaussian stationary-increment process $X(\cdot)=\left\{X(t),\,\, t\in\mathbb R\right\}$, over the probability
space $\mathbf L_2(\Omega,\mathscr F,\mathbb P)$, with spectral measure $\Delta$. Let $\mathcal G$ denote the Gaussian Hilbert space generated by this process, and $\mathscr F_A$ the sigma field induced by $\left\{X(t),\,\, t\in A\right\}$, where $A$ is a Borel set. Given $Y\in\mathbf L_2(\Omega,\mathscr F_{\mathbb R}, \mathbb P)$, we present a new way to obtain the conditional expectation $\mathbb E\left[Y|\mathscr F_A\right]$
which is based on the Wiener chaos written in terms of a special basis for $\mathcal G$ and the Hermite polynomials. Writing
\begin{equation}
\label{eq:Y_chaos}
Y(t)=\sum_{\alpha\in \mathcal J}y_\alpha(t) H_\alpha
\end{equation}
we have
\begin{equation} \label{eq:general_idea}
\mathbb E\left[Y(t)|\mathscr F_A\right] = \sum_{\alpha\in \mathcal J_0}y_\alpha(t) H_\alpha,
\end{equation}
where $\mathcal J_0\subset \mathcal J$ depends only on $A$ and $\Delta$. That is, each chaos element $H_\alpha$ is either measurable with respect to $\mathscr F_A$ or independent with respect to it.\\

Wiener chaos expansion has been found useful in studying the dynamics of a stochastic system driven by an underlying Gaussian process. In particular, a significant attention was given to its use in stochastic differential equations (SDE), since in many cases it is more feasible to obtain an expression for the chaos expansion of the solution to a SDE rather than the solution itself \cite{lototsky2006stochastic}. For example, in some cases an approximate solution can be obtained by deriving a series of regular differential equations \cite{Hou06wienerchaos}. 
The representation \eqref{eq:general_idea} developed in this work provides an easy way to obtain the solution of SDE when past realization of the noise process are given: the conditioned solution is simply given by discarding those coefficients which are not belong to $\mathcal J_0$. When the underlying process is a semi-martingale, the conditional solution is usually a simple function of the most recent sample (although it may not be trivial to compute, see \cite{briand2014simulation} for an example). The significance of the representation \eqref{eq:general_idea} is primarily when the underlying randomness is due to a general stationary-increment process with a richer memory structure than a semi-martingale. The special case of the fractional Brownian motion will be given a special attention. \\

If $Y$ in \eqref{eq:Y_chaos} belongs to $\mathcal G$, the problem of computing $\mathbb E\left[Y|\mathscr F_A\right]$ reduces to the problem of orthogonal projection onto the closed linear span of the functions $\left\{X(s),\,\, s\in A\right\}$, denoted by $\mathcal G_A$. In particular,  when $Y=X(t)$ with $t>0$ and $A=(-\infty,0]$ this is the classical Wiener-Kolmogorov prediction problem. If $t>T$ and $A=[-T,T]$, this is the finite horizon prediction problem, which was solved by Krein; see \cite{Dmk}. In this work, we consider these two cases and employ similar methods to obtain an orthonormal basis for the space $\mathcal G_A$. We also note that another case of interest is the interpolation problem, when $t\in(-T,T)$ and $A=(-\infty-T]\cup[T,\infty)$. This problem was solved by Dym and McKean \cite{Dmk}. We also refer to this book for background material on these various problems. \par
Consider now the more general case where $Y$ does not belong to the closed linear span of $X(\cdot)$. In this case the prediction problem becomes a non-linear problem, and it is usually hard to evaluate the predicted value of $Y$ from the statistics of the underlying process $X(\cdot)$. For example, if $Y=f(U)$ where $U\in \mathcal G$ and $f:\mathbb R \rightarrow \mathbb R$ measurable function such that $f(U) \in \mathbf L_2\left(\Omega,\mathscr F(\mathcal G),\mathbb P\right)$, it follows from \cite{Howland1979} that
\begin{equation} \label{eq:Howland}
\mathbb E \left[ f(U)| \mathscr F_A\right]=\int_{\mathcal G} f\left( \mathbb E[U| \mathscr F_A](w)+(I-P)(w)  \right) d\mu(w),
\end{equation}
where $d\mu$ is the standard Gaussian measure on $\mathcal G$ and $P$ is the orthogonal projection from  $\mathcal G$ onto $\mathcal G_A$. If $\mathcal G_A$ is one dimensional, then \eqref{eq:Howland} reduces to
\[
 \mathbb E \left[ f(U)| \mathscr F_A\right]=f_M\left( \mathbb E[U| \mathscr F_A] \right),
\]
where $f_M(U)$ is the Mehler transform of $f$; see \cite[Ex. 4.18]{MR99f:60082}. In general, formula \eqref{eq:Howland} does not lead to easy computations because of the Gaussian integral. \\

The purpose of the approach presented in this work is to transform the non-linear prediction problem $\mathbb E \left[f(U)| \mathscr F_A \right]$ into the linear problem of finding  $\mathbb E\left[U|\mathscr F_A \right]$. As an example where such transformation is easily obtained, consider the case where $f(x)=x^n$, or simply  $Y=U^n$ for $U\in \mathcal G$ and a positive integer $n$. In this case $\mathbb E\left[Y|\mathscr F_A\right]$ is the $n_{th}$ moment of the the random variable $U$ with respect to the conditional Gaussian distribution 
\[
f_{U|\mathcal F_A}(u)=\frac{1}{\sqrt{2\pi \sigma_{MSE}^2}} \exp\left\{-\frac{(u-\mathbb E\left[U|\mathscr F_A \right])^2}{2\sigma_{MSE}^2} \right\},
\]
where $\sigma_{MSE}^2=\mathbb E\left[(U-\mathbb E\left[U|\mathscr F_A \right])^2\right]$. We have
\begin{equation} \label{eq:simple_non_linear}
\mathbb E\left[U^n|\mathscr F_A\right]= h_n^{[-\sigma_{MSE}^2]}\left(\mathbb E\left[U|\mathscr F_A \right] \right),
\end{equation}
where $h_n^{[\alpha]}(x)$ is the $n_{th}$ Hermite polynomial with parameter $\alpha$:
\begin{equation}\label{eq:hermite_poly_param}
h_n^{[\alpha]}\left(x\right)\triangleq  n! \sum_{m=0}^{\lfloor n/2 \rfloor}  \frac{\alpha^{m}x^{n-2m} \cdot \left(-1/2\right)^m}{m!(n-2m)!}.
\end{equation}
Relation \eqref{eq:simple_non_linear} can be reformulate as
\begin{equation} \label{eq:simple_non_linear2}
\mathbb E\left[ h_n(U)|\mathscr F_A \right]=h_n^{[\sigma_{MSE}^2]}\left( E\left[U|\mathscr F_A \right] \right),
\end{equation}
where we denote $h_n=h_n^{[1]}$. Note that \eqref{eq:simple_non_linear} and \eqref{eq:simple_non_linear2} effectively transformed the  non-linear prediction problem into the Wiener-Kolmogorov-Krein linear prediction problem. This approach can be generalized by decomposing an element $Y\in \mathbf L_2\left(\Omega,\mathscr F,\mathbb P\right)$ as a sum of polynomials in elements of $\mathcal G$. This is the idea behind the Wiener chaos decomposition. \\

Let $\left\{E_k,k\in  J \right\}$, where $J\subset \mathbb Z$, be an orthogonal basis for the Gaussian Hilbert space $\mathcal G$. The Wiener-chaos expansion with respect to this basis is a decomposition of the space $\mathbf L_2\left(\Omega,\mathscr F(\mathcal G),\mathbb P\right)$ into spaces of polynomials, obtained as follows \cite{MR99f:60082, Holden, HidaKuo}: Denote by $\mathcal J$ be the set of multi-indexes over $J$, i.e. the set of functions $J \rightarrow \mathbb N$ with compact support. For $\alpha=\left(...\alpha_{j_1},\alpha_{j_2},... \right)\in \mathcal J$, define
\[
H_{\alpha}(\omega)=\prod_{j \in J} h_{\alpha_j}\left( E_j(\omega) \right),
\]
where $\left\{ h_n ,n\geq 0\right\}$ are the Hermite polynomials
\begin{equation}\label{eq:hermite_poly}
h_n\left(x\right)\triangleq  n! \sum_{m=0}^{\lfloor n/2 \rfloor}  \frac{x^{n-2m} \cdot \left(-1/2\right)^m}{m!(n-2m)!}.
\end{equation}

Assume moreover that $\left\{E_j,j\in J_0\right\}$, $J_0 \subset J$ is an orthogonal basis for $\mathcal G_A$  and denote by $\mathcal J_0$ the subset of multi-indexes whose support is contained in $J_0$.
Our underlying observation is given by the following theorem.

\begin{theorem} \label{th:main}
For every $\alpha \in \mathcal J$, $H_\alpha$ is measurable with respect to $\mathscr F_A$ if and only if $\alpha \in \mathcal J_0$, i.e. the support of $\alpha$ is contained in $J_0 \subset \mathbb Z$.
\end{theorem}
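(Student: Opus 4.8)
The plan is to exploit the fact that, inside a Gaussian space, orthogonality upgrades to stochastic independence, which reduces both directions of the equivalence to almost-mechanical measurability computations once the right structural reductions are in place. The first reduction I would carry out is to identify $\mathscr{F}_A$ with the $\sigma$-field generated by the closed linear span $\mathcal{G}_A$. Since every $X(s)$, $s\in A$, lies in $\mathcal{G}_A$ and, conversely, every element of $\mathcal{G}_A$ is an $\mathbf{L}_2$-limit of finite linear combinations of such $X(s)$ (hence, along an a.s.-convergent subsequence, is $\mathscr{F}_A$-measurable), these two $\sigma$-fields coincide up to $\mathbb{P}$-null sets. Because $\{E_j : j\in J_0\}$ is by hypothesis an orthogonal basis of $\mathcal{G}_A$, I then have $\mathscr{F}_A=\sigma(E_j : j\in J_0)$.

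For the easy implication, suppose $\alpha\in\mathcal{J}_0$, i.e. $\alpha_j=0$ for every $j\notin J_0$. Then $h_0\equiv 1$ kills all the absent factors and $H_\alpha=\prod_{j\in J_0}h_{\alpha_j}(E_j)$ is a finite product of Borel functions of the $\mathscr{F}_A$-measurable variables $E_j$, $j\in J_0$; hence $H_\alpha$ is $\mathscr{F}_A$-measurable.

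The substance of the theorem is the converse, which I would prove by contraposition. Assume $\alpha\notin\mathcal{J}_0$, so there is at least one index $k\notin J_0$ with $\alpha_k\geq 1$. Split $\alpha=\alpha'+\alpha''$, where $\alpha'$ is the restriction of $\alpha$ to $J_0$ and $\alpha''$ its restriction to $J\setminus J_0$, and factor $H_\alpha=H_{\alpha'}H_{\alpha''}$. The key observation is that the families $\{E_j : j\in J_0\}$ and $\{E_k : k\in J\setminus J_0\}$ are orthogonal in $\mathcal{G}$; being jointly Gaussian, they are therefore independent, so $H_{\alpha''}$ is independent of $\mathscr{F}_A$. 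Taking conditional expectations and using this independence gives $\mathbb{E}[H_\alpha\mid\mathscr{F}_A]=H_{\alpha'}\,\mathbb{E}[H_{\alpha''}]$. Since each $E_k$ is standard Gaussian and $\mathbb{E}[h_n(E_k)]=0$ for every $n\geq 1$, the presence of the factor with $\alpha_k\geq 1$ forces $\mathbb{E}[H_{\alpha''}]=0$, whence $\mathbb{E}[H_\alpha\mid\mathscr{F}_A]=0$. If $H_\alpha$ were $\mathscr{F}_A$-measurable it would coincide with its own conditional expectation, giving $H_\alpha=0$ almost surely; but $H_\alpha$ is a nonzero basis element (indeed, when the $E_j$ are normalized, $\|H_\alpha\|_{\mathbf{L}_2}^2=\prod_j\alpha_j!\neq 0$). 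This contradiction shows that $H_\alpha$ is not $\mathscr{F}_A$-measurable, completing the converse.

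I expect the main obstacle to be the two structural facts underpinning the argument rather than any computation: first, the identification $\mathscr{F}_A=\sigma(\mathcal{G}_A)$ up to null sets, which must be handled with a little care because $\mathcal{G}_A$ is a closed span whose elements are only limits of the observable variables; and second, the passage from orthogonality to independence, which is exactly where the Gaussian hypothesis is essential and without which the statement is false. A minor point to flag is normalization: the vanishing of $\mathbb{E}[h_n(E_k)]$ for $n\geq 1$ requires the $E_k$ to have unit variance, so I would either assume the basis is orthonormal or, equivalently, replace the mean computation by the observation that $H_{\alpha''}-\mathbb{E}[H_{\alpha''}]$ is a non-constant function of variables independent of $\mathscr{F}_A$, so that $H_{\alpha'}\bigl(H_{\alpha''}-\mathbb{E}[H_{\alpha''}]\bigr)$ cannot vanish almost surely.
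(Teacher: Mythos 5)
Your proof is correct, but it takes a genuinely different route from the paper's. The paper computes $\mathbb{E}[H_\alpha\mid\mathscr F_A]$ via second quantization: it writes $H_\alpha$ as the Wick product $:\prod_j E_j^{\alpha_j}:$, identifies the conditional expectation with $\Gamma P$, the second quantization of the orthogonal projection $P$ onto $\mathcal G_A$, and uses the functorial identity $\Gamma P:\prod_j E_j^{\alpha_j}:\;=\;:\prod_j (PE_j)^{\alpha_j}:$, so that any index $j\notin J_0$ with $\alpha_j\geq 1$ contributes a factor $:(PE_j)^{\alpha_j}:$ with $PE_j=0$ and annihilates the conditional expectation. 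You reach the same intermediate conclusion $\mathbb{E}[H_\alpha\mid\mathscr F_A]=H_{\alpha'}\,\mathbb{E}[H_{\alpha''}]=0$ by elementary probabilistic means: the factorization $H_\alpha=H_{\alpha'}H_{\alpha''}$, the fact that orthogonal jointly Gaussian families are independent, and the vanishing of $\mathbb{E}[h_n(E_k)]$ for $n\geq 1$. Your argument is self-contained, avoids the Fock-space machinery (the cited results of Janson on $\Gamma P$ and Wick products) entirely, and your explicit identification of $\mathscr F_A$ with $\sigma(E_j: j\in J_0)$ up to null sets, together with the observation $\|H_\alpha\|_{\mathbf L_2}^2=\prod_j\alpha_j!\neq 0$, makes the final passage from ``conditional expectation zero'' to ``not measurable'' cleaner than in the paper, which leaves that step implicit. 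What the paper's approach buys is uniformity with its broader framework: the same second-quantization identity is what underlies the Mehler transform and the Wick-exponential computations later on. One point worth noting in your favor: the paper's parenthetical assertion that $\mathbb{E}[H_\alpha\mid\mathscr F_A]=0$ ``means that $H_\alpha$ is independent of $\mathscr F_A$'' is not justified and is in general false when $\alpha'\neq 0$ (take $H_\alpha=E_1E_2$ with $E_1\in\mathcal G_A$ and $E_2\perp\mathcal G_A$; then the conditional second moment is $E_1^2$, which is random); your contrapositive uses only the measurability consequence, which is all the theorem actually asserts. Your flag about normalization is also apt, since the basis must be orthonormal (as in the restatement in Section 2) for $\mathbb{E}[h_n(E_k)]=0$ to hold as you use it.
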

Theorem \ref{th:main} can be implicitly found in \cite[Ch. 7]{MR99f:60082}, and an explicit proof will be given in Section 2. Theorem \ref{th:main} might have been useless unless we could obtain some explicit orthogonal bases for the space $\mathcal G_A$, this is the content of Sections 4 and 5 in which we review some methods to do so in two cases of interest for the time index set $A$. The setting for Sections 4 and 5 is given in Section 2. In Section 6 we discuss on application of these chaos elements, and provide explicit evaluation for the case where $X(\cdot)$ is the fractional Brownian motion.

\section{Proof of Theorem \ref{th:main} \label{sec:proof_main}}

\begin{theorem} \label{th:main2}
Let $\left\{E_k,k\in  J \right\}$ be an orthonormal basis for $\mathcal G$ such that $\left\{E_j,j\in J_0\right\}$ span $\mathcal G_A$ where $J_0 \subset J$. Then for every $\alpha \in \mathcal J$, $H_\alpha$ is measurable with respect to $\mathscr F_A$ if and only if $\alpha$ is contained in $\mathcal J_0 \subset \mathbb Z$.
\end{theorem}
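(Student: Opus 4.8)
The plan is to exploit the fact that an orthonormal basis of a Gaussian Hilbert space consists of \emph{independent} standard normal variables, and to reduce the statement to a single computation of a conditional expectation. First I would record the probabilistic reformulation: since $\{E_k : k \in J\}$ is orthonormal in $\mathcal G$, the $E_k$ are jointly Gaussian and uncorrelated, hence mutually independent $N(0,1)$ variables, so that the family $\{E_j : j \in J_0\}$ is independent of $\{E_j : j \in J \setminus J_0\}$. The second preliminary, which I regard as the technical heart of the argument, is the identification of $\sigma$-algebras
\[
\mathscr F_A = \sigma\left(\{X(t) : t \in A\}\right) = \sigma\left(\{E_j : j \in J_0\}\right),
\]
valid up to $\mathbb P$-null sets. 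Both inclusions follow from $\mathcal G_A = \overline{\mathrm{span}}\{X(s) : s \in A\}$ together with the hypothesis that $\{E_j : j \in J_0\}$ spans $\mathcal G_A$: every $E_j$ with $j \in J_0$ is an $\mathbf L_2$-limit of finite linear combinations of the $X(s)$, $s \in A$, while conversely each $X(t)$, $t \in A$, lies in $\mathcal G_A$ and is therefore the $\mathbf L_2$-sum $\sum_{j \in J_0} \langle X(t), E_j \rangle E_j$. Passing from $\mathbf L_2$-convergence to an almost-surely convergent subsequence shows that each side is measurable with respect to the completion of the other.

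With these two facts in hand, the ``if'' direction is immediate: when $\alpha \in \mathcal J_0$ the product $H_\alpha = \prod_{j \in J_0} h_{\alpha_j}(E_j)$ is a Borel function of $\{E_j : j \in J_0\}$ alone, hence $\mathscr F_A$-measurable.

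For the ``only if'' direction I would argue by contraposition. Suppose $\alpha \notin \mathcal J_0$ and split $\alpha = \alpha' + \alpha''$, where $\alpha'$ is supported on $J_0$ and $\alpha''$ on $J \setminus J_0$, so that $\alpha'' \neq 0$. This factorizes $H_\alpha = H_{\alpha'} \cdot H_{\alpha''}$, with $H_{\alpha'}$ measurable with respect to $\sigma(\{E_j : j \in J_0\})$ and $H_{\alpha''}$ independent of it. Since conditioning on $\mathscr F_A$ is the same as conditioning on $\sigma(\{E_j : j \in J_0\})$, pulling out the measurable factor gives
\[
\mathbb E\left[H_\alpha \mid \mathscr F_A\right] = H_{\alpha'} \, \mathbb E\left[H_{\alpha''}\right] = H_{\alpha'} \prod_{j \in J \setminus J_0} \mathbb E\left[h_{\alpha_j}(E_j)\right] = 0,
\]
because $\alpha'' \neq 0$ forces at least one factor with $\alpha_j \geq 1$, and $\mathbb E[h_n(Z)] = 0$ for $n \geq 1$ and $Z \sim N(0,1)$, the Hermite polynomials of positive degree being orthogonal to the constants. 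On the other hand $H_\alpha$ is not the zero element of $\mathbf L_2$, since by independence and orthogonality $\mathbb E[H_\alpha^2] = \prod_{j} (\alpha_j!) > 0$. Were $H_\alpha$ $\mathscr F_A$-measurable, we would have $\mathbb E[H_\alpha \mid \mathscr F_A] = H_\alpha$, forcing $H_\alpha = 0$ almost surely, a contradiction. Hence $H_\alpha$ is not $\mathscr F_A$-measurable, completing the proof.

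The step I expect to require the most care is the $\sigma$-algebra identification $\mathscr F_A = \sigma(\{E_j : j \in J_0\})$: one must track $\mathbb P$-null sets when replacing $\mathbf L_2$-limits by almost-sure limits, and be precise about whether $\mathscr F_A$ is taken complete. Everything downstream — the factorization, the independence, and the vanishing conditional expectation — is then routine given the i.i.d. structure of an orthonormal Gaussian basis.
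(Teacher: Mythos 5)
Your proof is correct, but it takes a genuinely different route from the paper. The paper works inside the symmetric Fock space: it invokes the second quantization $\Gamma P$ of the projection $P$ onto $\mathcal G_A$, identifies $\Gamma P$ with the conditional expectation $\mathbb E[\,\cdot\,|\mathscr F_A]$, rewrites $H_\alpha$ as the Wick product $:\prod_j E_j^{\alpha_j}:$, and uses the intertwining $\Gamma P:\prod_j E_j^{\alpha_j}:\;=\;:\prod_j (PE_j)^{\alpha_j}:$ to conclude that the conditional expectation is either $H_\alpha$ itself or $0$ according to whether the support of $\alpha$ meets $J\setminus J_0$. You bypass all of that machinery with elementary probability: the orthonormal family $\{E_k\}$ is i.i.d.\ $N(0,1)$, so $\{E_j:j\in J_0\}$ and $\{E_j:j\in J\setminus J_0\}$ are independent; after identifying $\mathscr F_A$ with $\sigma(\{E_j:j\in J_0\})$ up to null sets, the factorization $H_\alpha=H_{\alpha'}H_{\alpha''}$ plus ``pull out what is known'' and $\mathbb E[h_n(Z)]=0$ for $n\ge 1$ gives $\mathbb E[H_\alpha|\mathscr F_A]=0$, while $\mathbb E[H_\alpha^2]=\prod_j\alpha_j!>0$ rules out measurability. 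Your argument is more self-contained (it needs no Wick calculus or second quantization, only the independence structure of a Gaussian orthonormal system) and it is explicit about the one point the paper glosses over, namely the $\sigma$-algebra identification modulo null sets; the paper's argument buys a cleaner connection to the Fock-space formalism that the rest of the paper is built on, and directly exhibits $\mathbb E[H_\alpha|\mathscr F_A]$ as the output of $\Gamma P$. One small remark: your contrapositive establishes $\mathbb E[H_\alpha|\mathscr F_A]=0$, which is exactly what the displayed formula \eqref{eq:general_idea} needs, but note that vanishing conditional expectation is weaker than full independence of $H_\alpha$ from $\mathscr F_A$ when $\alpha$ has mixed support; since the theorem as stated only concerns measurability, this does not affect your proof.
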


\begin{proof}
Let $\Gamma(\mathcal G)$ be the symmetric Fock space of $\mathcal G$. Recall that we have \cite[p. 18]{MR99f:60082}
\[
\Gamma(\mathcal G)=\bigoplus_{n=0}^\infty \mathcal G^{:n:}=\mathbf L_2(\Omega,\mathscr F_{\mathbb R},\mathbb P),
\]
where $\mathcal G^{:n:}$ is the $n_{th}$ symmetric tensor power of $\mathcal G$. We also denote by
 \[
 :X_1,\ldots,X_m:
 \]
the Wick product of the elements $X_1,\ldots,X_m$ of $\mathcal G$.

Let $P$ denote the orthogonal projection onto $\mathcal G_A$, i.e. for an element $X\in \mathcal G$ we have
\[
PX=\mathbb E\left[ X|\mathscr F_A \right].
\]

Since $\|P\|=1$, $\Gamma P$, the second quantization of $P$, is a bounded linear operator on $\Gamma(\mathcal G)$ \cite[Theorem 4.5]{MR99f:60082}, and by \cite[Theorem 4.9]{MR99f:60082} we have that $\Gamma P: \mathbf L_2\left(\Omega,\mathscr F_{\mathbb R},\mathbb P\right) \rightarrow \mathbf L_2(\Omega,\mathscr F_A,\mathbb P)$ equals the conditional expectation
\[
Y \rightarrow \mathbb E\left[Y|\mathscr F_A \right],\quad Y\in \mathbf L_2(\Omega,\mathscr F_{\mathbb R},\mathbb P).
\]
It follows that
\begin{equation} \label{eq:thm1_proof}
\begin{split}
\mathbb E\left[ H_\alpha |\mathscr F_A\right] & = (\Gamma P) H_\alpha \\
& =  \Gamma P \prod_{j \in J}  h_{\alpha_j} \left(E_j \right)\\
& \stackrel{a} = \Gamma P : \prod_{j \in  J}  E_j^{\alpha_j}: \\
& \stackrel{b} = ~ : \prod_{j \in J}  P E_j^{\alpha_j} :\\
& = \prod_{j \in  J}  h_{\alpha_j} \left(P E_j \right)\\
& = \prod_{j \in J_0}  h_{\alpha_j} \left(P E_j \right) ~\prod_{j \in  J \setminus J_0}  h_{\alpha_j} \left(P E_j \right),
\end{split}
\end{equation}
where $(a)$ follows from \cite[Theorem 3.21]{MR99f:60082} and (b) follows from the definition of the second quantization of $P$ (see \cite[Theorem 4.5]{MR99f:60082}). Since $E_j \in \mathcal G_A^\bot$ for $j\in J\setminus J_0$, if $\alpha_j \neq 0$ for some entry of $\alpha$ then $h_{\alpha_j}(PE_j)=h_{\alpha_j}(0) = 1$. In this case $\mathbb E\left[H_\alpha|\mathscr F_A \right]=0$, which means that $H_\alpha$ is independent of $\mathscr F_A$. The other option is that $\alpha_j=0$ for all $j\in J\setminus J_0$. Since $h_{\alpha_j}(x)\equiv 1$ when $\alpha_j=0$ by its definition, \eqref{eq:thm1_proof} implies
\[
\mathbb E\left[H_\alpha|\mathscr F_A \right]=\prod_{j \in J_0}  h_{\alpha_j} \left(P E_j \right)= \prod_{j \in J_0}  h_{\alpha_j} \left(E_j \right) = H_\alpha.
\]
\end{proof}

\section{Hilbert spaces associated with a Gaussian stationary increment process}
\label{sec:3}
In this section we review standard ideas from the literature on continuous time Gaussian stochastic processes. We describe two additional Hilbert spaces isomorphic to $\mathcal G$, using the notions of the \textit{Wiener integral} and the \textit{trigonometric isomorphism}.
This sets the frameworks for sections 4 and 5 in which we obtain a basis for $\mathcal G$ that satisfy the conditions in Theorem \ref{th:main2}. \\

Assume first we are given a Gaussian stationary process $\dot{X}(\cdot)\triangleq \left\{\dot{X}(t),t\in \mathbb R\right\}$. The spectral measure $\Delta(\gamma)$ is determined by Bochner's theorem through
\begin{equation} \label{eq:stationary_cov}
\mathbb E \left[\dot{X}(t_1)\dot{X}(t_2)\right]=\int_{-\infty}^\infty e^{i\gamma(t_1-t_2)}d\Delta(\gamma).
\end{equation}
This defines the so called \textit{trigonometric isomorphism} between the Gaussian Hilbert space $\mathcal G$ generated by $\dot{X}(\cdot)$, i.e. the close linear span of $\left\{\dot{X}(t),t\in \mathbb R \right\}$ in $\mathbf L_2\left(\Omega, \mathscr F_{\mathbb R},\mathbb P\right)$, and the space $\mathbf L_2\left(d\Delta \right)$, given by
\[
\dot{X}(t)\longrightarrow e^{i\gamma t}.
\]

If $\dot{X}(\cdot)$ is path-wise integrable then $X(t)\triangleq\int_0^t \dot{X}(s)ds$ is a Gaussian stationary increment process, with covariance function
\begin{equation} \label{eq:cov}
\mathbb E\left[X(t_1)X(t_2)\right]=\int_{-\infty}^\infty  \frac{1-e^{i\gamma t_1}}{i\gamma} \frac{1-e^{-i\gamma t_2}}{-i\gamma} d\Delta(\gamma),
\end{equation}
so that $\frac{e^{i\gamma t}-1}{i\gamma }$ is the image of $X(t)$ under the trigonometric isomorphism. In the setting of distributions, we can write the following correspondences
\begin{equation} \label{eq:correspondance}
\begin{split}
\dot{X}(t)  \longleftrightarrow & \quad e^{i\gamma t}  \quad \longleftrightarrow  \delta(t-\cdot),  \\
X(t)  \longleftrightarrow   & ~~ \frac{e^{i\gamma t-1}}{i\gamma}  ~~ \longleftrightarrow  \mathbf 1_t,
\end{split}
\end{equation}
where the left relation is the trigonometric isomorphism and the right relation is the Fourier transform. In \eqref{eq:correspondance} we used $\delta(t)$ to denote the Dirac delta distribution concentrated at the origin and
\[
\mathbf 1_t(x) \triangleq \mathbf 1_{[0,t]}(x) \triangleq \begin{cases} 1, & x\in [0,t] \\
0, & x\notin [0,t],
\end{cases}.
\]
We see that for a given $t\geq 0$, $X(t)$ may be interpreted as the stochastic integral of the deterministic function $\mathbf 1_t$ \cite[p. 87]{MR99f:60082}, and can be extended to $t<0$ by setting
\[
\mathbf 1_t(x) = \begin{cases} 1 & 0< x \leq t, \\
      -1 & -t\leq x < 0, \\
   0 & {\rm otherwise} \end{cases}.
 \]

For $f\in \mathbf L_2(\mathbb R)$ we denote by $\widehat{f}$ its Fourier transform
\[
 \widehat{f}(\gamma)=\int_{-\infty}^\infty f(t)e^{i\gamma t}
\]
and by $\widecheck{f}$ its inverse. \\

In general, for $f\in \mathbf L_2(\mathbb R)$ subject to
\begin{equation} \label{eq:hilbert_space_condition}
\int_{\mathbb R} |\widehat{f}(\gamma)|^2d\Delta(\gamma) < \infty,
\end{equation}
we can define its stochastic integral with respect to the process $X(\cdot)$ as the zero mean Gaussian random variable with variance $\int_{\mathbb R} |\widehat{f}(\gamma)|^2d\Delta(\gamma)$. The set of functions in $\mathbf L_2(\mathbb R)$ which satisfy \eqref{eq:hilbert_space_condition} constitute a pre-Hilbert space, and we denote its completion by $\mathbf L_\Delta$. The map $I:\mathbf L_\Delta \longrightarrow \mathcal G$ that carries an element of $\mathbf L_\Delta$ into its stochastic integral is an Hilbert space isomorphism, and $\mathcal G$ can be regarded as the Gaussian Hilbert spaces associated with the Hilbert space $\mathbf L_\Delta$ \cite{MR99f:60082}, in the sense that for each $f_1,...,f_n \in L_\Delta$, $I(f_1),...,I(f_n)$ have a joint central normal distribution with covariance matrix $Q$,
\[
Q_{j,i}=\left(f_i,f_j\right)_{\Delta}=\int_{-\infty}^\infty \widehat{f}_i(\gamma)  \overline{\widehat{f}_j}(\gamma)d\Delta(\gamma),
\]
where $\left(\cdot,\cdot\right)_\Delta$ is the inner product in $\mathbf L_\Delta$ induced by the norm \eqref{eq:hilbert_space_condition}. Using these notations, the covariance function \eqref{eq:cov} can be written as
\[
\mathbb E\left[X(t) X(s)\right]=\left(\mathbf 1_t,\mathbf 1_s\right)_\Delta.
\]
In the case of $d\Delta=d\gamma$, $\mathbf L_\Delta$ reduces to $\mathbf L_2(\mathbb R)$ and the image of $f\in L_2(\mathbb R)$ under $I$ is called the Wiener stochastic integral of $f$ \cite[Chapter 9]{doob1953stochastic}. \\

\begin{Rk} \label{rk:pathwise_integral}
We note that for $f\in \mathbf L_\Delta$, one can define its stochastic integral with respect to $X$ in the usual way by first setting
\[
\int_{\mathbb R} f(t) dX(t)=\sum_i \alpha_i \left( X(t_{i+1})-X(t_i)\right)
\]
for a simple function $f(t)=\sum_i \alpha_i \mathbf 1_{[t_{i+1},t_i]}$, and then take the limit in $\mathbf L_\Delta$ for a general $f\in \mathbf L_\Delta$. It can be shown that we obtain
\begin{equation} \label{eq:Wiener_integral}
\int_{\mathbb R}f(t)dX(t)= I(f)
\end{equation}
in $\mathbf L_2\left(\Omega,\mathscr F_{\mathbb R},\mathbb P\right)$, that is, both definitions coincide.
\end{Rk}

In many practical cases, almost every sample path of the stationary increment process $X(\cdot)$ is nowhere differentiable. This happens for example in the case of the Brownian motion or the fractional Brownian motion. However under the condition
\begin{equation} \label{eq:measure_growth_condition}
\int_{\mathbb R} \frac{d\Delta(\gamma)}{1+\gamma^2} <\infty,
\end{equation}
it is easy to see that the indicator function $\mathbf 1_t$ still belongs to $\mathbf L_\Delta$. Since both spaces $\mathcal G$ and $\mathbf L_\Delta$, as well as the isometric map between them, are determined exclusively by the spectral measure $\Delta$, starting with $\Delta$, we may use the representation
\[
X(t)=I(\mathbf 1_t),\quad t\in \mathbb R
\]
as the definition of the process $X$. We set
\[
z_t=T(\mathbf 1_t)=\frac{e^{i\gamma t-1}}{i\gamma},\quad t\in \mathbb R.
\]
Under the condition \eqref{eq:measure_growth_condition}, each $z_t$ belongs to $\mathbf L_2(d\Delta)$. Denote by $\mathbf Z$ the close linear span of $\left\{z_t,~t\in \mathbb R\right\}$ in $\mathbf L_2(d\Delta)$ and by $\mathbf Z_A$ the close linear span of $\left\{z_t,~t \in A\right\}$ in $L_2(d\Delta)$. It is well known (see for example \cite{Dmk} that $\mathbf Z=\mathbf L_2(d\Delta)$. \\

We have obtained the following isomorphic Hilbert spaces
\[
\mathbf L_\Delta \xrightarrow{ \quad I \quad } \mathcal G \xrightarrow{ \quad T \quad } \mathbf Z.
\]
Note that in the sense of distributions,  $T\circ I: \mathbf L_\Delta \rightarrow \mathbf Z$ is the Fourier transform. \\
The importance of the above Hilbert spaces isomorphism is that it allows us to exchange the problem of obtaining an orthogonal basis for $\mathcal G$ and $\mathcal G_A$ with the problem of doing so in $\mathbf Z$ and $\mathbf Z_A$. Our benefit comes from the fact that now the theory of orthogonal projections into spaces of analytical functions is at our disposal.

\section{Prediction with respect to the entire past}
\label{sec:entire_past}
In order to be in the setting of Theorem \ref{th:main}, we first need to  find an explicit orthogonal basis for the space $\mathcal G_A$. In this section we will show how to do so in the case that $A=\left(-\infty,0\right]$ which corresponds to the Wiener-Kolmogorov prediction problem. The case where $A=\left[-T,T\right]$ for some $T>0$ which corresponds to the problem solved by Krein is treated in the next section.\\

Recall that in the case of prediction with respect to the entire past, Szeg\"{o} theorem provides us with a criterion whether the prediction is perfect or not, i.e. when
\[
\mathbb E\left[X(t)| \mathscr F_{\left(-\infty,0\right]}\right]=X(t),\quad \forall t\in \mathbb R,
\]
is in $\mathbf L_2(\Omega,\mathscr F_{\mathbb R},\mathbb P)$ or not. Or in trigonometric language: whether $z_t\in \mathbf Z_{(-\infty,0]}$ or else
\[
\mathbb E\left[\left(X(t)-\mathbb E\left[X(t)| \mathscr F_{\left(-\infty,0\right]}\right]\right)^2 \right]>0.
\]
Szeg\"{o} criterion says that if
\begin{equation} \label{eq:szego1}
\int_{-\infty}^\infty \frac{\log\Delta'(\gamma)}{\gamma^2+1}d\gamma>-\infty,
\end{equation}
then $\mathbf  Z_{(-\infty,0]} \neq \mathbf Z$, and in particular $z_T \notin \mathbf Z_{(-\infty,0]}$ for any $T>0$. 
The other option 
\begin{equation}
\int_{-\infty}^\infty \frac{\log\Delta'(\gamma)}{\gamma^2+1}d\gamma = -\infty,
\end{equation}
implies $\mathbf Z_{(-\infty,0]} =\mathbf  Z$, i.e., the future is completely determined by the past.

Karhunen \cite{karhunen1950struktur} has showed that under the conditions \eqref{eq:szego1} and $\Delta(\infty)=\int_{-\infty}^\infty d\Delta(\gamma) <\infty$, the spectral density can be decomposed as
\begin{equation} \label{eq:spectral_dec}
\Delta'(\gamma)=h(\gamma)\overline{h}(\gamma),
\end{equation}
where $h$ is an \textit{outer function} in the Hardy space $H^{2+}$ (see \cite{MR0027954}). An outer function $h\in H^{2+}$ satisfies the property that the span of $e^{i\gamma t}\overline{h}(\gamma)$, $t<0$, in $\mathbf L_2(d\gamma)$ equals $H^{2-}$, 
or equivalently, that the
span of $z_t \overline{h}$, $t<0$, in $\mathbf L_2(\mathbb R)$ equals $H^{2-}$. To see this equivalence we note that, for $k\in\mathbf L_1(\mathbb R)$
\[
\int_{\mathbb R}e^{i\gamma t}k(\gamma)d\gamma=0,\forall t>0\,\,\iff\,\,
\int_{\mathbb R}\frac{e^{i\gamma t}-1}{\gamma}k(\gamma)d\gamma=0, \forall t>0
\]
as is seen by differentiation and integration with respect to $t$.\\

Throughout this section we assume the spectral measure $\Delta$ satisfies both condition \eqref{eq:measure_growth_condition} and Szeg\"{o} criterion for optimal prediction with respect to the entire past. We further assume that $d\Delta$ is absolutely continuous with respect to the Lebesgue measure, namely $d\Delta=\Delta'(\gamma)d\gamma$. In view of the discussion in \cite[Section 4.3]{Dmk}, this assumption does not limit the generality of our approach. Since these assumption does not yet guaranty $\Delta(\infty)<\infty$, we look instead at the measure $\frac{\Delta'(\gamma) d\gamma}{1+\gamma^2}$. We have
\[
\int_{-\infty}^\infty \frac{\log\left(\frac{\Delta'(\gamma)}{1+\gamma^2} \right)}{1+\gamma^2} d\gamma < \infty,
\]
so that we may decompose $\frac{\Delta'(\gamma) d\gamma}{1+\gamma^2}$ as
\begin{equation} \label{eq:def_h}
\frac{\Delta'(\gamma) d\gamma}{1+\gamma^2} = h(\gamma)\overline{h}(\gamma)
\end{equation}
with $h$ outer, and thus obtain the decomposition:
\[
d\Delta=\Delta'(\gamma)d\gamma=|\left(\gamma-i\right)h(\gamma)|^2d\gamma.
\]
Since the process $X$ is real, $\Delta$ is always even and we can also impose the condition $h(-\gamma)=-\overline{h}(\gamma)$ \cite[Exercise 2.7.4]{Dmk}, so that the inverse Fourier transform of $(\gamma-i)h$ is a real distribution.\\

\begin{lem} \label{lem:span}
The closed linear span of the functions $\left\{(\gamma+i)z_th,\,\,t\leq 0 \right\}$ in $\mathbf L_2(d\gamma)$ equals $H^{2-}$.
\end{lem}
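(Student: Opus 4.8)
The plan is to read the lemma as the assertion that multiplication by the outer spectral factor carries the ``past'' subspace $\mathbf Z_{(-\infty,0]}=\overline{\mathrm{span}}\{z_t:t\le 0\}$ isometrically \emph{onto} $H^{2-}$, and then to establish the two inclusions separately. Let $\psi$ denote the spectral factor from \eqref{eq:def_h} (with the normalisation $h(-\gamma)=-\overline h(\gamma)$ this is the $H^{2-}$--outer function $\psi=(\gamma-i)\overline h$, equivalently the reflection of $(\gamma+i)h$). By \eqref{eq:def_h} one has $|\psi(\gamma)|^2=|\gamma-i|^2\,|h(\gamma)|^2=(1+\gamma^2)\tfrac{\Delta'(\gamma)}{1+\gamma^2}=\Delta'(\gamma)$, so that $f\mapsto \psi f$ is an isometry from $\mathbf L_\Delta\cong\mathbf L_2(d\Delta)$ into $\mathbf L_2(d\gamma)$. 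Since an isometry carries closed spans to closed spans, the space in the statement is exactly the image $\psi\,\mathbf Z_{(-\infty,0]}$, and it suffices to identify this image with $H^{2-}$.

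For the inclusion into $H^{2-}$ I would argue in the time domain (Paley--Wiener). Because $h$ is outer in $H^{2+}$, the inverse Fourier transform of $\overline h$ is supported on $(-\infty,0]$; multiplication by the first--order polynomial $(\gamma-i)$ is a differential operator that does not enlarge that support, so $\psi$ has inverse Fourier transform supported on $(-\infty,0]$. For $t\le 0$ multiplication by $z_t$ amounts to convolution with $\mathbf 1_t$, which is supported on $[t,0]\subseteq(-\infty,0]$; hence each $\psi z_t$ again has inverse Fourier transform supported on $(-\infty,0]$ and therefore lies in $H^{2-}$. Membership in $\mathbf L_2(d\gamma)$ is automatic from the isometry, since $\|\psi z_t\|_{d\gamma}^2=\|z_t\|_{d\Delta}^2=\mathbb E[X(t)^2]<\infty$.

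For the reverse (density) inclusion I would pass to orthogonal complements inside $H^{2-}$. Suppose $k\in H^{2-}$ is orthogonal to every generator; writing out $\int k\,\overline{\psi}\,\overline{z_t}\,d\gamma=0$ and setting $K=k\overline\psi$, this reads $\int K(\gamma)\,\tfrac{e^{-i\gamma t}-1}{\gamma}\,d\gamma=0$ for all $t\le 0$. Here I would invoke the integration-by-parts equivalence recorded just before the lemma, which turns these conditions into $\int K(\gamma)e^{i\gamma s}\,d\gamma=0$ for all $s>0$, i.e. $K\in H^{2+}$. Now $\overline\psi=(\gamma+i)h$ is outer in $H^{2+}$, hence $1/\overline\psi$ is again outer, and $k=K/\overline\psi$ inherits membership in the upper Hardy class; since $k$ was already in $H^{2-}$ and $H^{2+}\cap H^{2-}=\{0\}$, we conclude $k=0$. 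This is precisely where outerness of the spectral factor is used (equivalently, the cyclicity statement recalled above that $\{z_t\overline h\}_{t<0}$ spans $H^{2-}$): it is what forces the inner part of $\psi\,\mathbf Z_{(-\infty,0]}$ to be trivial, so that the image is all of $H^{2-}$ rather than a proper shift-invariant subspace $\Theta H^{2-}$.

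The hard part will be integrability rather than algebra. Because the Szeg\H{o} hypothesis does not guarantee $\Delta(\infty)=\int d\Delta<\infty$, the factor $\psi$ need not be square--integrable, so $K=k\overline\psi$ need not a priori lie in $\mathbf L_1$ or $\mathbf L_2$ and the Fourier manipulations in the density step are not literally licit. The role of the polynomial factor $(\gamma\pm i)$ together with the weight $\tfrac{d\gamma}{1+\gamma^2}$ in \eqref{eq:def_h}, and of the standing hypothesis \eqref{eq:measure_growth_condition}, is exactly to control this growth. I expect the delicate portion of the proof to be justifying the complement computation and the integration-by-parts step in the correct weighted setting---for instance by first testing against $k$ whose Fourier transform is compactly supported away from the origin, carrying out the argument there, and then passing to the limit---after which the Hardy--space dichotomy $H^{2+}\cap H^{2-}=\{0\}$ closes the argument.
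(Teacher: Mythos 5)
Your overall strategy shares the paper's key ingredient: everything rests on the cyclicity property of the outer factor, namely that the span of $\left\{z_t\overline{h},\ t\le 0\right\}$ in $\mathbf L_2(d\gamma)$ is all of $H^{2-}$, together with the observation that multiplication by $(\gamma\pm i)h$ is an isometry of $\mathbf L_2(d\Delta)$ into $\mathbf L_2(d\gamma)$. Your containment direction (each $(\gamma+i)z_th$ lies in $H^{2-}$, by Paley--Wiener support considerations) is fine, and is in fact more explicit than the paper, which leaves that inclusion implicit. Where you genuinely diverge is the completeness step. The paper never divides by the spectral factor: it introduces the subspace $\mathbf L=\left\{f\in H^{2-}:(\gamma-i)f\in H^{2-}\right\}$, shows $\mathbf L$ is dense in $H^{2-}$ because it contains the Fourier transforms of Schwartz functions truncated to $(-\infty,0]$, and then, for $f\in\mathbf L$ orthogonal to all generators, rewrites $\int z_t(\gamma+i)h\overline{f}\,d\gamma$ as $\int z_th\,\overline{(\gamma-i)f}\,d\gamma$, so that cyclicity applies directly to the square-integrable function $(\gamma-i)f$ and forces $f=0$. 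The polynomial factor is thus moved onto a dense class of test functions that can absorb it, and every integral in sight stays in $\mathbf L_2(d\gamma)$.

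Your alternative --- multiply the hypothetical orthogonal element $k$ by $\overline{\psi}$, conclude $K=k\overline{\psi}\in H^{2+}$, then divide back using outerness and the Smirnov class --- contains a genuine gap, which you flag but do not close. Since $|\overline{\psi}|^2=\Delta'$ need not be integrable, $K$ lies in neither $\mathbf L_1$ nor $\mathbf L_2$, so the assertion ``$\int Ke^{i\gamma s}\,d\gamma=0$ for $s>0$, hence $K\in H^{2+}$'' has no a priori meaning, and the differentiation/integration equivalence quoted before the lemma (stated for $\mathbf L_1$ densities) does not apply to $K$. The remedy you sketch does not repair this: $k$ is a fixed, arbitrary element of the orthogonal complement, not a test function you are free to take with compactly supported Fourier transform, and a limiting argument would require uniform control of exactly the non-integrable quantities at issue. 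This is precisely the difficulty the paper's device avoids, since for $f\in\mathbf L$ the object $(\gamma-i)f$ is again an honest $H^{2-}$ function and no division or distributional Fourier statement is needed. Two smaller points: dividing an $H^{2+}$ function by an outer function lands you only in the Smirnov class $N^+$ of the upper half-plane, so you must invoke $N^+\cap\mathbf L_2=H^{2+}$ explicitly before concluding $k\in H^{2+}\cap H^{2-}=\{0\}$; and your argument, like the paper's, ultimately only shows that the orthogonal complement meets a dense subclass trivially, so the choice of that subclass (the paper's $\mathbf L$) is where the real work lives.
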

\begin{proof}
Denote by $\mathbf K$ the close linear span of $\left\{z_t(\gamma+i)h,\,\,t\leq 0 \right\}$ in $\mathbf L_2(d\gamma)$.
Let $\mathbf L$ be the set of functions $f\in H^{2-}$ such that $(\gamma-i)f(\gamma)$ still belongs to $H^{2-}$. It has been noted above that for $h\in H^{2-}$ outer, the closed linear span of $\left\{z_th,\,\,t \leq 0 \right\}$ in $\mathbf L_2(d\gamma)$ is all $H^{2-}$, so if a function $f\in \mathbf L$ satisfies
\[
\int_{-\infty}^\infty z_t(\gamma)(\gamma+i)h(\gamma)\overline{f}(\gamma)d\gamma=0,
\]
for all $t\leq 0$, we conclude that $(\gamma+i)f \equiv 0$, so $f=0$, and we have that $\mathbf K^\perp \bigcap \mathbf L=\left\{0\right\}$. In order to complete the proof it is enough to show that $\mathbf L$ is a dense subset of $H^{2-}$. The Schwartz space $\mathscr S$ of smooth rapidly decreasing functions is a dense subset of $\mathbf L_2(\mathbb R)$ and is invariant under differentiation and Fourier transformation. For $s\in \mathscr S$ we denote by $\widetilde{s}$ its projection into $\mathbf L_2\left( (-\infty,0]\right)
$, which is the set of functions in $\mathbf L_2(\mathbb R)$ supported in $\left(-\infty,0\right]$. Recall that $H^{2-}=\widehat{\mathbf L_2}\left( (-\infty,0]\right) = \left\{ \widehat{f},\,f\in \mathbf L_2\left((-\infty,0] \right) \right\}$, which implies that $\widehat{\widetilde{\mathscr S}}$ is a dense subset of $H^{2-}$. We prove that it is also contained in $\mathbf L$. Let $s\in \mathscr S$, then
\[
 \left(\gamma+i\right)\widehat{\widetilde{s}}= \left(\gamma+i\right)\int_{-\infty}^0 s(t)e^{i\gamma t}dt= i\int_{-\infty}^0 \frac{ds}{dt}(t)e^{i\gamma t}dt+i\int_{-\infty}^0 s(t)e^{i\gamma t}dt,
\]
and the last two terms are the Fourier transform of functions in  $\mathbf L_2\left( (-\infty,0]\right)$, hence belong to $H^{2-}$. This completes the proof.
\end{proof}

In what follows we construct an orthonormal basis for the space $\mathbf L_\Delta$ in terms of the function $h$ and the functions
\[
e_n(\gamma)=\frac{1}{\sqrt{\pi}} \frac{1}{1-i\gamma}\left(\frac{1+i\gamma}{1-i\gamma}\right)^n, n\in \mathbb Z,
\]
which constitute an orthonormal basis of $\mathbf L_2(\mathbb R)$. In addition, the family $\left\{e_n,\,\, n\geq 0\right\}$ spans the Hardy space $H^{2+}$ while the family $\left\{e_n ,\,\, n<0\right\}$ spans $H^{2-}$; see \cite[Section 2.5]{Dmk}.
We also note that for $n\geq 0$ and $x>0$, the inverse Fourier transform of the $\{e_n\}$ are the Laguerre functions:
\[
e_n^{\vee}(x)=\frac{1}{\sqrt{\pi}n!}\frac{d^n}{d\gamma^n}\left(e^{-i\gamma x}\left(i-\gamma\right)^n \right)
\]
evaluated at $\gamma=-i$. \\

We are now looking for a set of functions in $\mathbf L_\Delta$ whose images under $I$ constitute an orthogonal basis in $\mathcal G$. In view of \eqref{eq:def_h}, the function $(\gamma+i)\overline{h}s$, when $s$ belongs to the Schwartz space $\mathscr S$ of smooth rapidly decreasing functions, belongs to $\mathbf L_2(\mathbb R)$. Moreover
the linear span of the functions $(\gamma+i)\overline{h}s$ with $s\in\mathscr S$ is dense in $\mathbf L_2(d\gamma)$. Indeed, let $g\in\mathbf L_2(d\gamma)$ be such
that
\[
\int_{\mathbb R}s(\gamma)(\gamma+i)\overline{h}(\gamma)\overline{g}(\gamma)d\gamma=0,\quad \forall s\in\mathscr S.
\]
Then the function $(\gamma-i)h\overline{g}$ (which need not belong to $\mathbf L_2(d\gamma)$) defines the zero distribution on $\mathscr S$, and so is a.e. equal to $0$.\\
This proves that there exists a sequence $\left\{s_k\right\}:=\left\{s_k,\,k\in \mathbb N\right\}$ of Schwartz functions such that
\[
\lim_{k\rightarrow\infty}\|(\gamma+i)\overline{h}s_k-e_n\|_{\mathbf L_2(d\gamma)}=0.
\]
Therefore the sequence $\left\{s_k\right\}$ tends to $\frac{e_n}{(\gamma+i)\overline{h}}$ in $\mathbf L_2(d\Delta)$,
and so the sequence $\left\{\widecheck{s_k}\right\}$ is a Cauchy sequence in $\mathbf L_\Delta$. We denote its limit
by $\xi_n$.\\

If $\frac{e_n}{(\gamma+i)\overline{h}}$ is in $\mathbf L_2(d\gamma)$, then $\widehat{\xi_n}=\frac{e_n}{(\gamma+i)\overline{h}}$.

\begin{theorem} \label{th:orth_basis}
The set $\left\{ I(\xi_n),~n\in \mathbb Z\right\}$ forms an orthonormal basis for $\mathcal G$. Moreover,
\[
    \mathbb E\left[  I(\xi_n)|\mathscr F_{(-\infty,0]}\right]= \begin{cases}
                                                             I(\xi_n), & n<0 \\
                                                            0,  & n\geq 0
                                                            \end{cases},
\]
so that $\left\{ I(\xi_n),~n<0 \right\}$ spans the past, and $\left\{I(\xi_n),~n\geq 0 \right\}$ spans its orthogonal complement.
\end{theorem}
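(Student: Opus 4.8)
The plan is to transport the whole problem into $\mathbf{L}_2(d\gamma)$, where the orthonormal basis $\{e_n\}$ and the Hardy-space splitting $\mathbf{L}_2(d\gamma)=H^{2-}\oplus H^{2+}$ are explicit, and to exhibit a single unitary map that simultaneously sends each $\xi_n$ to $e_n$ and the ``past'' subspace to $H^{2-}$. Everything then reduces to reading off the projection onto $H^{2-}$ in the $\{e_n\}$ basis.

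First I would introduce the multiplication operator $U\colon \mathbf{L}_\Delta\to \mathbf{L}_2(d\gamma)$ given by $Uf=(\gamma+i)\overline{h}\,\widehat{f}$. Using the decomposition \eqref{eq:def_h}, i.e. $d\Delta=|(\gamma-i)h|^2\,d\gamma$, together with $|\gamma-i|=|\gamma+i|$ and $|h|=|\overline{h}|$, the weights cancel exactly and one gets
\[
\|Uf\|_{\mathbf{L}_2(d\gamma)}^2=\int_{\mathbb R}|\gamma+i|^2|h|^2|\widehat{f}|^2\,d\gamma=\int_{\mathbb R}|\widehat{f}|^2\,d\Delta=\|f\|_\Delta^2,
\]
so $U$ is isometric. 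By the construction of $\xi_n$ we have $\widehat{\xi_n}=e_n/((\gamma+i)\overline{h})$, hence $U\xi_n=e_n$; since $\{e_n\}$ is an orthonormal basis of $\mathbf{L}_2(d\gamma)$, the range of $U$ is dense, and being the isometric image of the complete space $\mathbf{L}_\Delta$ it is also closed, so $U$ is unitary. This at once yields both orthonormality and completeness of $\{\xi_n\}$ in $\mathbf{L}_\Delta$: $(\xi_n,\xi_m)_\Delta=(e_n,e_m)_{\mathbf{L}_2(d\gamma)}=\delta_{nm}$, and the $\xi_n$ span because the $e_n$ do. Applying the isometry $I$ shows $\{I(\xi_n),\,n\in\mathbb Z\}$ is an orthonormal basis of $\mathcal{G}$. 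For those $n$ where $e_n/((\gamma+i)\overline{h})\notin\mathbf{L}_2(d\gamma)$, so that $\xi_n$ is only the $\mathbf{L}_\Delta$-limit of $\{\widecheck{s_k}\}$, these identities are obtained by passing to the limit, using continuity of $U$.

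Next I would identify the conditional expectation with an orthogonal projection and push it through $U$. As recorded in the proof of Theorem \ref{th:main2}, for $X\in\mathcal{G}$ the map $X\mapsto\mathbb E[X\mid\mathscr F_A]$ is the orthogonal projection $P$ onto $\mathcal{G}_A$; here $A=(-\infty,0]$ and $\mathcal{G}_A=I\big(\overline{\operatorname{span}}\{\mathbf{1}_t,\,t\le 0\}\big)$ since $I(\mathbf{1}_t)=X(t)$. Under $U$ the generator $\mathbf{1}_t$ maps to $(\gamma+i)\overline{h}\,z_t$, and by Lemma \ref{lem:span} the closed linear span of these functions over $t\le 0$ is exactly $H^{2-}$. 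Hence the composite unitary $U\circ I^{-1}$ carries $\mathcal{G}_A$ onto $H^{2-}$ and intertwines $P$ with the orthogonal projection of $\mathbf{L}_2(d\gamma)$ onto $H^{2-}$.

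Finally I would read off the conclusion from the Hardy splitting: since $\{e_n,\,n<0\}$ is an orthonormal basis of $H^{2-}$ and $\{e_n,\,n\ge 0\}$ one of $H^{2+}=(H^{2-})^\perp$, the projection onto $H^{2-}$ fixes $e_n$ for $n<0$ and annihilates $e_n$ for $n\ge 0$. Transporting back by $U^{-1}$ and $I$ gives $\mathbb E[I(\xi_n)\mid\mathscr F_{(-\infty,0]}]=I(\xi_n)$ for $n<0$ and $=0$ for $n\ge 0$, and the two span statements follow immediately. The hard part will be the projection step: one must carefully match the multiplier $(\gamma+i)\overline{h}$ built into $U$ against the function in Lemma \ref{lem:span}, keeping track of the conjugation and of the reflection symmetry $h(-\gamma)=-\overline{h}(\gamma)$ imposed after \eqref{eq:def_h}, and invoke the content of Lemma \ref{lem:span}, namely that inserting the factor $(\gamma+i)$ does not push $z_t\overline{h}$ out of $H^{2-}$.
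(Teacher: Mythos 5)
Your proposal is correct and follows essentially the same route as the paper: the same weight-cancelling computation showing $(\xi_n,\xi_m)_\Delta=(e_n,e_m)_{\mathbf L_2(d\gamma)}$, the same appeal to Lemma \ref{lem:span} to identify the image of the past with $H^{2-}$, and the same reading-off of the projection in the $\{e_n\}$ basis. Packaging these steps as a single unitary intertwiner $U\circ I^{-1}$ rather than as element-wise inner-product calculations is a tidier organization but not a different argument, and your closing caveat about matching $h$ versus $\overline{h}$ is a real (minor) bookkeeping issue that the paper itself glosses over.
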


\begin{proof}
The fact that the $ I(\xi_n)$ are orthonormal is immediate by construction since
\[
\mathbb E\left[ I(\xi_n)  I(\xi_m)\right]=\int_{-\infty}^\infty \frac{e_n(\gamma)}{(\gamma-i)\overline{h}(\gamma)}\frac{\overline{e_m}(\gamma)}{(\gamma+i)h(\gamma)} (1+\gamma^2)|h(\gamma)|^2 d\gamma=\left(e_n,e_m\right)_{\mathbf L_2(d\gamma)}.
\]
To show that they span $\mathcal G$, let $f \in \mathbf L_\Delta$ and assume that $ I(f)$ is perpendicular to their span. Then for all $n \in \mathbb Z$,
\[
0=\mathbb E\left[ I(\xi_n)  I(f)\right]=\int_{-\infty}^\infty e_n (\gamma-i) \overline{\widehat{f}}{h} d\gamma=\left(e_n,(\gamma+i)\overline{h}\widehat{f}\right)_{\mathbf L_2(d\gamma)}.
\]
Since $\left\{e_n,\,\, n\in \mathbb Z\right\}$ is an orthonormal basis for $\mathbf L_2(d\gamma)$, it follows that $\widehat{f}~\overline{h}$ is zero in $\mathbf L_2(d\gamma)$. But it follows from condition \eqref{eq:szego1} that $h(\gamma)\neq 0$ almost everywhere, so we conclude that $f$, and thus $I(f)$, equals zero. Note that this also proves that $\left\{\xi_n,\,\, n\in \mathbb Z\right\}$ is an orthonormal basis of $\mathbf L_\Delta$.\\
Now for $t\leq 0$,
\begin{equation} \label{eq:proj}
\begin{split}
\mathbb E\left[I(\xi_n)X(t)\right] & =\int_{-\infty}^\infty \frac{e_n}{(\gamma+i)\overline{h}}(\gamma^2+1)\overline{z_t} \left|h(\gamma)\right|^2 d\gamma \\
& = \int_{-\infty}^\infty  (\gamma-i)e_n \overline{z_t}h d\gamma \\
& = \left(e_n,(\gamma+i)z_t\overline{h} \right)_{\mathbf L_2(d\gamma)}.
\end{split}
\end{equation}
From Lemma~\ref{lem:span} we know that the span of $\left\{ z_t(\gamma+i)\overline{h},\,\,t \leq 0  \right\}$ in $\mathbf L_2(\mathbb R)$ equals $H^{2-}$, so the last term in \eqref{eq:proj} vanishes for $n\geq0$. In order to calculate $\widetilde{I(\xi_n)}\triangleq \mathbb E\left[I(\xi_n)|\mathscr F_{(-\infty,0]}\right]$ for $n<0$ we can use the trigonometric isomorphism and instead look for the projection of $\frac{e_n(\gamma)}{(\gamma+i)\overline{h}}$ onto $\mathbf Z_{(-\infty,0]}$. Let $g=c_1 z_t+ \ldots +c_nz_t$, where $t_1,\ldots t_n, \leq 0$ and $c_1,...,c_n \in \mathbb C$.
\[
\begin{split}
\mathbb E\left[|I(\xi_n)-\widetilde{I(\xi_n)}|^2\right]=\inf_{g}\int_{-\infty}^\infty|
\frac{e_n(\gamma)}{(\gamma+i)\overline{h}(\gamma)}- g(\gamma)|^2 \Delta'(\gamma) d\gamma= \\
\inf_{g}\int_{-\infty}^\infty
|e_n(\gamma)- g(\gamma)(\gamma+i)\overline{h}(\gamma)|^2 d\gamma.
\end{split}
\]
Since the span of $\left\{ z_t(\gamma+i)\overline{h},\,\,t \leq 0  \right\}$ is $H^{2-}$ and $e_n \in H^{2-}$ for $n<0$, the last projection norm is trivial, so $\widetilde{I(\xi_n)}=I(\xi_n)$ in $\mathcal G$, and hence $I(\xi_n) \in \mathscr F_{(-\infty, 0]}$.
\end{proof}

\begin{Ex}
The chaos expansion of $X(t)$ is given by
\[
 X(t)=I(\mathbf 1_t)=\sum_{n=-\infty}^\infty \left(\xi_n,\mathbf 1_t \right)_\Delta  I\left(\xi_n\right)=\sum_{n=-\infty}^\infty \left(\xi_n,\mathbf 1_t \right)_\Delta H_{\epsilon(n)},
\]
where $\epsilon(n)=\left(...,0,1,0,...\right)$ with $1$ at the $n_{th}$ place. It follows that
\begin{equation} \label{eq:conditional_expansion}
\mathbb E\left[X(t)|\mathscr F_{(-\infty,0]} \right]=\sum_{n=-\infty}^{-1} \left(\xi_n,\mathbf 1_t \right)_\Delta I\left(\xi_n\right).
\end{equation}

The complementary projection is given by
\[
 \sum_{n=0}^{\infty} \left(\xi_n,\mathbf 1_t \right)_\Delta I\left(\xi_n\right),
\]
so that the variance of the prediction error is
\[
\sum_{n=0}^{\infty} \left(\xi_n,\mathbf 1_t \right)^2_\Delta.
\]
\end{Ex}

\begin{Ex}
The Wick exponent of the process $X(t)$ has the chaos expansion
 \[
:e^{ X(t)}: = \exp \left\{ I(\mathbf 1_t)-\| \mathbf 1_t \|_{\Delta}^2 \right\} = \sum_{\alpha\in \mathcal J} c_\alpha H_\alpha,
 \]
 with (see \cite[Exercise 2.8 (e)]{new_sde}
 \[
 c_\alpha = \prod_{n=-\infty}^\infty \frac{\left(1_t,\xi_n\right)_\Delta^{\alpha_n}}{\alpha_n!}.
 \]
 It follows that
 \[
 \mathbb E\left[:e^{X(t)}:| \mathscr F_{(-\infty,0]}  \right]=\sum_{\alpha\in \mathcal J_0} c_\alpha H_\alpha,
 \]
 where now for $\alpha \in \mathcal J_0$,
 \[
 c_\alpha = \prod_{n=-\infty}^{-1} \frac{\left(1_t,\xi_n\right)_\Delta^{\alpha_n}}{\alpha_n!}.
 \]

From \cite{MR99f:60082} we have
\[
\mathbb E\left[ :e^{ U}:| \mathscr F' \right]=:e^{ \mathbb E\left[U |\mathscr F' \right]}:=\exp\left\{\mathbb E\left[U | \mathscr F' \right]-\frac{1}{2}\mathbb E\left[X^2 | \mathscr F' \right]\right\}
\]
and we obtain the following identity:
\begin{equation}
\sum_{\alpha\in \mathcal J_0} c_\alpha H_\alpha= \exp \left\{\sum_{n=-\infty}^{-1} \left(\xi_n,\mathbf 1_t \right)_\Delta I\left(\xi_n\right)-\frac{1}{2}\sum_{n=0}^{\infty} \left(\xi_n,\mathbf 1_t \right)^2_\Delta\right\}.
\end{equation}
\end{Ex}

\subsection{Basis Elements and Sample Path Relation}
In the classical prediction problem we are asked to find the conditional expectation with respect to $\mathscr F_{(-\infty,0]}$ expressed in term of the path $X_{(-\infty,0]}(\cdot) = \left\{X(t),\,t\in A \right\}$. By Theorem \ref{th:orth_basis} for $n<0$,
$I(\xi_n)$  is completely determined by $X_{(-\infty,0]}(\cdot)$. 
Due to the trigonometric isomorphism we have
\[
I(\xi_n)=I\left(\mathbf 1_{[-\infty,0]}\xi_n\right)+I\left(\mathbf 1_{[0,\infty]}\xi_n\right),
\]
and
\begin{equation} \label{eq:xi_is_zero}
 \|1_{[0,\infty]}\xi_n\|_\Delta^2=\int_{-\infty}^\infty  \left|\widehat{1_{[0,\infty]}\xi_n}(\gamma) \right|^2 d\Delta(\gamma).
\end{equation}
Note that $\widehat{1_{[0,\infty]}\xi_n}$ is the projection of
\begin{equation} \label{eq:proj_2_H2}
\widehat{\xi_n}(\gamma)=\frac{e_n(\gamma)}{(\gamma+i)\overline{h}(\gamma)}= \frac{1}{\sqrt{\pi}} \left(\frac{1+i\gamma}{1-i\gamma}\right)^n  \frac{ 1}{(1+\gamma^2)\overline{h}(\gamma)}
\end{equation}
into $H^{2+}$ (see \cite[Ch. 2.4]{Dmk}). If $\overline{h}$ does not vanish too fast as $\gamma \rightarrow \infty$, (in general, condition \eqref{eq:szego1} does not guarantee that), then $\widehat{\xi_n}$ belongs to $H^{2-}$, in which case $\widehat{1_{[0,\infty]}\xi_n}=0$. That is, we have the following proposition:
\begin{prop} \label{prop:path}  If $\widehat{\xi_n} \in \mathbf L_2(d\gamma)$, then 
\[
\mathbb E \left[ I(\xi_n) | \mathscr F_{(-\infty,0]}\right] =  I(\xi_n \mathbf 1_{(-\infty,0]}),
\]
 for all $n=-1,-2,\ldots$.
\end{prop}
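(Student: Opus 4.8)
The plan is to reduce the statement to showing that the ``future part'' $I(\mathbf 1_{[0,\infty)}\xi_n)$ of the decomposition
\[
I(\xi_n)=I(\mathbf 1_{(-\infty,0]}\xi_n)+I(\mathbf 1_{[0,\infty)}\xi_n)
\]
recorded just above the statement vanishes in $\mathcal G$ for every $n<0$. Indeed, by Theorem~\ref{th:orth_basis} we have $\mathbb E\left[I(\xi_n)\mid\mathscr F_{(-\infty,0]}\right]=I(\xi_n)$ for $n<0$, while $I(\mathbf 1_{(-\infty,0]}\xi_n)$ is, by Remark~\ref{rk:pathwise_integral}, the stochastic integral of a function supported on $(-\infty,0]$ and hence $\mathscr F_{(-\infty,0]}$-measurable. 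Thus once $I(\mathbf 1_{[0,\infty)}\xi_n)=0$ is established, the two members of the asserted identity coincide.

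First I would translate the vanishing onto the frequency side. By the Wiener isometry $\|I(\mathbf 1_{[0,\infty)}\xi_n)\|^2=\|\mathbf 1_{[0,\infty)}\xi_n\|_\Delta^2$, which by \eqref{eq:xi_is_zero} equals $\int_{\mathbb R}|\widehat{\mathbf 1_{[0,\infty)}\xi_n}|^2\,d\Delta$. Since $\widehat{\xi_n}\in\mathbf L_2(d\gamma)$ by hypothesis, Plancherel gives $\xi_n\in\mathbf L_2(\mathbb R)$, so truncation to $[0,\infty)$ in time is a genuine $\mathbf L_2$ operation corresponding in frequency to orthogonal projection onto $H^{2+}$; thus $\widehat{\mathbf 1_{[0,\infty)}\xi_n}$ is exactly the $H^{2+}$-component of $\widehat{\xi_n}$. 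It therefore suffices to prove $\widehat{\xi_n}\in H^{2-}$ for $n<0$, for then its $H^{2+}$-projection, and with it the whole integrand, is identically zero.

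The heart of the matter is then to show $\widehat{\xi_n}\in H^{2-}$. Starting from the explicit form \eqref{eq:proj_2_H2} and writing $n=-m$ with $m\geq1$, I would use $1+\gamma^2=(1+i\gamma)(1-i\gamma)$ to factor
\[
\widehat{\xi_n}(\gamma)=\frac{1}{\sqrt{\pi}}\,\frac{(1-i\gamma)^{m-1}}{(1+i\gamma)^{m+1}}\,\frac{1}{\overline{h}(\gamma)}.
\]
Every factor on the right extends analytically to the open lower half-plane: the polynomial $(1-i\gamma)^{m-1}$ is entire, the rational factor $(1+i\gamma)^{-(m+1)}$ has its only pole at $\gamma=i$ in the upper half-plane, and since $h$ is outer in $H^{2+}$ its reflection $\overline{h}$ is outer in the lower-half-plane sense, so $1/\overline{h}$ is analytic and non-vanishing there. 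Consequently $\widehat{\xi_n}$ is analytic and of bounded type in the lower half-plane, with no singular or Blaschke inner factor obstructing membership in $H^{2-}$.

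The step I expect to be the main obstacle is converting this analyticity into genuine $H^{2-}$ membership: the factor $1/\overline{h}$ carries no a priori $H^p$ bound and may grow as $\gamma\to\infty$ — precisely the failure mode flagged before the statement — so one cannot simply multiply Hardy-space functions. The hypothesis $\widehat{\xi_n}\in\mathbf L_2(d\gamma)$ is what rescues the argument: a function in the Smirnov class of the lower half-plane whose boundary values lie in $\mathbf L_2(d\gamma)$ belongs to $H^{2-}$. Invoking this, together with the outer (hence Smirnov-class) nature of $1/\overline{h}$ and the boundedness of type of the rational factor, yields $\widehat{\xi_n}\in H^{2-}$, which closes the chain and proves the proposition.
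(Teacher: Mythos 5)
Your proposal follows the paper's own route exactly: the same past/future decomposition of $I(\xi_n)$, the same reduction via \eqref{eq:xi_is_zero} to the vanishing of the $H^{2+}$-projection of $\widehat{\xi_n}$, and the same conclusion that $\widehat{\xi_n}\in H^{2-}$ for $n<0$. Your only addition is to make explicit, via the Smirnov maximum principle, why the hypothesis $\widehat{\xi_n}\in\mathbf L_2(d\gamma)$ forces $\widehat{\xi_n}\in H^{2-}$ despite $1/\overline{h}$ carrying no a priori $H^p$ bound --- a point the paper only gestures at with the remark that $\overline{h}$ must not vanish too fast --- so your write-up is, if anything, more complete than the original.
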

If the condition in Proposition~\ref{prop:path} is met, then the $\mathbf L_2\left(\Omega,\mathscr F_{\mathbb R},\mathbb P\right)$ stochastic integral with respect to the process $X(t)$ can also be evaluated from its sample path which is defined in the standard way as in explained in \eqref{eq:Wiener_integral}. See also \cite[Sec. 2]{Gripenberg96} for a pathwise definition of the stochastic integral with respect to the fractional Brownian motion. In such case we get
\begin{equation} \label{eq:fBm_int}
I(\xi_n) = I(\mathbf 1_{(-\infty,0]}\xi_n)=\int_{-\infty}^0 \xi_n(t) dX(t).
\end{equation}
This allows us to express the $I(\xi_n)$ in \eqref{eq:conditional_expansion} in terms of the sample path, which leads to a prediction formula for a general Gaussian stationary increment process.

\section{Bounded time interval} \label{sec:bti}

In the case where $A=[-T,T]$ we are looking for an orthogonal basis for $\mathcal G_{[-T,T]}$, or equivalently,
for its image under the trigonometric isomorphism $\mathbf Z_{[-T,T]}$. For the conditions to optimal prediction in this case we refer to \cite[Section 6.4]{Dmk}. The space $\mathbf Z_{[-T,T]}$ is a reproducing kernel Hilbert space of entire functions isometrically included in $\mathbf L_2(d\Delta)$, and invariant under
the backward shift operators
\[
R_af(\gamma)=\frac{f(\gamma)-f(a)}{\gamma-a},\quad a\in\mathbb C.
\]
Therefore, by a theorem of de Branges, see \cite[Theorem 3]{dbhsaf1}, and (for instance) by an application of
\cite[Theorem 3.1]{ad1}, one sees that the reproducing kernel of $\mathbf Z_{[-T,T]}$ is of the form
\begin{equation}
\label{dbrk}
K(\gamma,\lambda)=\frac{B(\gamma)\overline{A}(\lambda)-A(\gamma)\overline{B}(\lambda)}{\gamma-\overline{\lambda}},
\end{equation}
where $A(\gamma)$ and $B(\gamma)$ are entire function of the variable $\lambda$ of finite exponential type.
A characterization of certain orthogonal sets in such spaces is given in \cite[Theorem 22]{MR0229011}. We recall the
result for completeness. Set $E(\gamma)=A(\gamma)-iB(\gamma)$. Then there exists a continuous function
$\varphi(x)$ ($x\in\mathbb R$) such that $E(x,T)e^{i\varphi(x)}\in\mathbb R$ for all $x\in\mathbb R$. Let
$\alpha\in\mathbb R$ and let $x_1,x_2,\ldots \in\mathbb R$ be such that $\varphi(x_n)\equiv \alpha$ (mod $\pi$).
The functions
\[
K(\gamma, x_n)
\]
form an orthogonal set of $\mathbf Z_{[-T,T]}$, and it is complete if and only if the function $e^{-i\alpha}E(\gamma)-e^{i\alpha}
\overline{E}(\overline{\gamma})$ does not belong to $\mathbf Z_{[-T,T]}$.\\

One can compute explicitly the functions $A(\gamma)$ and $B(\gamma)$ in some special cases. For instance
Dym and Gohberg considered in \cite{dg-80} the case where the spectral density $\Delta^\prime$ is the form
\[
\Delta^\prime(\gamma)=1-\widehat{k}(\gamma),
\]
where $k\in\mathbf L^1(\mathbb R)$ and such that $1-\widehat{k}>0$
(in fact, they consider the matrix-valued non Hermitian case). The case where
\[
\Delta^\prime(\gamma)= c_H |\gamma|^{1-2H},
\]
which corresponds to the fractional Brownian motion, was considered by Dzhaparidze and H. van Zanten in \cite{DVZ} and will be revisited again in Section \ref{sec:fbm_bounded}.
More generally, one needs to use Kreins's theory of strings, as explained in \cite{Dmk} and \cite[Section 2.8]{DVZ}, to compute the reproducing kernel of $\mathbf Z_{[-T,T]}$. \\


\section{Chaos and Prediction with respect to the Fractional Brownian Motion}
\label{sec:application}
\setcounter{equation}{0}

In this section we now specialize to the case where the spectral measure of $X$ is given by
\begin{equation} \label{eq:fBm_spectral}
d\Delta(\gamma)= C_H |\gamma|^{1-2H} d\gamma,\quad 0<H<1,
\end{equation}
where $C_H = \frac{\Gamma(1+2H) \sin \left(\pi H\right)} {2 \pi}$ and $\Gamma(x)$ is Euler's Gamma function. This measure satisfies conditions \eqref{eq:measure_growth_condition} and \eqref{eq:szego1}. The corresponding stationary increment Gaussian process $X$ is called the fractional Brownian motion with Hurst parameter $H$ and is denoted $B_H$. Its covariance function is given by
\[
\mathbb E\left[ B_H(t) B_H(s) \right]= \frac{1}{2}\left( |t|^{2H}+|s|^{2H}-|t-s|^{2H} \right),
\]
and it can be shown to have an almost surely continuous sample paths \cite{Molchan}. This process has been found useful in a host of applications, and was extensively studied in the past few decades; see for example \cite{bosw, Ustunel, MR1790083, duncan2009control}. \\

In what follows, we will evaluate the coefficients in the chaos expansion for $B_H(\cdot)$ based on an orthonormal basis with the properties of Theorem \ref{th:main2}. We start with the case of prediction with respect to the entire past. 

\subsection{Prediction with respect to the Entire Past ($A=\left(-\infty,0\right]$)}
For this process $B_H(\cdot)$  with $A=\left(-\infty,0\right]$ we derive the following:

\begin{theorem}
The outer function in the Wiener-Hopf spectral decomposition of $\frac{\Delta'(\gamma)}{1+\gamma^2}$, where $\Delta'(\gamma)=C_H |\gamma|^{1-2H}$, that admits the reality condition $(-\gamma-i)h(-\gamma)=(\gamma+i)\overline{h}(\gamma)$ is given by
\[
h(\gamma) =\sqrt{C_H}\exp\left\{i \pi \frac{2H-1}{4}sign(\gamma)\right\} \frac{i-\gamma}{1+\gamma^2} |\gamma|^{1/2-H}.
\]
\end{theorem}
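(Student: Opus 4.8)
The plan is to explicitly construct the outer function $h$ by computing the spectral factorization of the density $\frac{\Delta'(\gamma)}{1+\gamma^2} = \frac{C_H|\gamma|^{1-2H}}{1+\gamma^2}$ via the classical logarithmic (Hilbert transform) formula, and then verify that the resulting function satisfies all three required properties: the factorization \eqref{eq:def_h}, outerness in $H^{2+}$, and the reality condition $(-\gamma-i)h(-\gamma) = (\gamma+i)\overline{h}(\gamma)$.

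First I would recall that an outer function $h \in H^{2+}$ satisfying $|h(\gamma)|^2 = \frac{\Delta'(\gamma)}{1+\gamma^2}$ a.e.\ is given by the canonical formula
\[
h(\gamma) = \exp\left\{\frac{1}{2}\log\left(\frac{\Delta'(\gamma)}{1+\gamma^2}\right) + \frac{i}{2}\, (\mathcal H \log)(\gamma)\right\},
\]
where $\mathcal H$ is the Hilbert transform; equivalently, $\log h$ is the boundary value of the analytic function in the upper half-plane whose real part is $\frac{1}{2}\log|h|^2$. The content of the theorem is that this abstract object has the closed form stated. So the real work is to verify directly that the proposed candidate is outer and has the right modulus, rather than to carry out the Hilbert-transform integral. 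I would compute the modulus first: since $\left|\exp\{i\pi\frac{2H-1}{4}\mathrm{sign}(\gamma)\}\right| = 1$ and $\left|\frac{i-\gamma}{1+\gamma^2}\right| = \frac{\sqrt{1+\gamma^2}}{1+\gamma^2} = (1+\gamma^2)^{-1/2}$, one gets $|h(\gamma)|^2 = C_H\,(1+\gamma^2)^{-1}\,|\gamma|^{1-2H} = \frac{\Delta'(\gamma)}{1+\gamma^2}$, confirming the factorization \eqref{eq:def_h}.

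Next I would verify outerness and the $H^{2+}$ membership. The factor $\frac{i-\gamma}{1+\gamma^2} = \frac{-(\gamma-i)}{(\gamma-i)(\gamma+i)} = \frac{-1}{\gamma+i}$ is, up to sign, the reproducing-kernel-type factor $\frac{1}{\gamma+i}$ which is outer in $H^{2+}$ with a pole only in the lower half-plane. The remaining factors $\exp\{i\pi\frac{2H-1}{4}\mathrm{sign}(\gamma)\}\,|\gamma|^{1/2-H}$ combine to form the boundary value of $(-i\gamma)^{1/2-H}$ (with the branch cut taken along the negative imaginary axis), an analytic outer function in the upper half-plane with a power singularity at $\gamma=0$ of integrable type; here I would check that the phase jump of the $\mathrm{sign}$ exponential matches exactly the phase of $(-i\gamma)^{1/2-H}$ across $\gamma=0$. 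This identification is the conceptual heart of the argument: it shows the candidate is a genuine outer function whose argument is the Hilbert transform of its log-modulus.

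The main obstacle, and the step requiring the most care, will be the reality condition $(-\gamma-i)h(-\gamma) = (\gamma+i)\overline{h}(\gamma)$, as flagged earlier in the text (see \cite[Exercise 2.7.4]{Dmk}), since it pins down the precise phase convention. Substituting $\gamma \to -\gamma$ flips $\mathrm{sign}(\gamma)$ and hence conjugates the exponential phase factor, turns $\frac{i-\gamma}{1+\gamma^2}$ into $\frac{i+\gamma}{1+\gamma^2}$, and leaves $|\gamma|^{1/2-H}$ unchanged; I would track how the prefactors $(-\gamma-i)$ and $(\gamma+i)$ interact with these changes to confirm both sides agree. This even/odd bookkeeping is exactly what forces the specific constant $\frac{2H-1}{4}$ in the exponent and the sign of the $(i-\gamma)$ factor, so I expect the verification to consist of a short but delicate symmetry computation that closes the argument.
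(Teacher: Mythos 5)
Your proposal is correct, but it proves the theorem by a genuinely different route than the paper. The paper \emph{derives} $h$ by evaluating the canonical outer-function integral of Duren, namely $\phi(z)=\exp\bigl\{\frac{1}{2\pi i}\int_{\mathbb R}\frac{\gamma z+1}{\gamma-z}\frac{\log(C_H|\gamma|^{1-2H}/(\gamma^2+1))}{\gamma^2+1}\,d\gamma\bigr\}$, in closed form, restricts the result to the real axis, and then multiplies by a unimodular constant ($i$) to enforce the reality condition. You instead \emph{verify} the stated candidate: check that its modulus squared equals $\Delta'(\gamma)/(1+\gamma^2)$, exhibit it as the product of the elementary outer functions $-1/(\gamma+i)$ and the boundary value of $(-iz)^{1/2-H}$ (your phase bookkeeping is right: on the boundary $(-i\gamma)^{1/2-H}=|\gamma|^{1/2-H}e^{i\pi\frac{2H-1}{4}\operatorname{sign}(\gamma)}$, which is exactly the stated phase factor), and confirm the symmetry identity $(-\gamma-i)h(-\gamma)=(\gamma+i)\overline{h}(\gamma)$, which indeed reduces to the elementary computation $-(\gamma+i)^2/(1+\gamma^2)=-(\gamma+i)/(\gamma-i)$ on both sides. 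Your approach trades the paper's rather opaque special-function integral (whose intermediate expression is hard to check by hand) for a transparent verification that relies on the standard fact that a zero-free Smirnov-class product of outer factors is outer; the cost is that it presupposes the answer rather than producing it, and you should make the outerness of $(-iz)^{1/2-H}$ explicit rather than leaving it as "I would check." One small caveat, shared equally by the paper's own argument: modulus, outerness, and the reality condition determine $h$ only up to a sign (a unimodular constant $c$ must satisfy $c=\bar c$), so strictly speaking the theorem's "the outer function" should read "an outer function, unique up to sign."
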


\begin{proof}
Using the formula \cite[(11), p. 193]{duren}, to find the outer factor $\tilde{h}$ in the decomposition \eqref{eq:def_h},
 we see that the outer factor in the factorization of \eqref{eq:fBm_spectral} is given by
\begin{equation} \label{eq:fBm_outer_integral}
\begin{split}
\phi(z) & = \exp \left\{\frac{1}{2\pi  i} \int_{-\infty}^\infty  \frac{\gamma z +1}{\gamma -z} \frac{\log(C_H |\gamma |^{1-2 H} / (\gamma^2+1) )} {\gamma^2+1} d\gamma   \right\} \\
& = \exp\left\{ -i\left(\pi/2-\tan^{-1}(z)-z\sqrt{-1/z^2} \tanh^{-1} (1+\frac{2}{z^2}) \right) \right\} \\
\times & \exp \left\{ -\frac{i}{4\pi} \left( \log(-z)-\log(z) \right)\left( (2H-1)\left(\log(z)+\log(-z) \right)-2\log(C_H)  \right) \right\}
\end{split}
\end{equation}
Taking the restriction of $\phi(z)$ to the real line we obtain
\[
\frac{i\gamma+1}{1+\gamma^2}  ~ \sqrt{C_H} \exp\left\{sign(\gamma) \frac{2H-1}{4} \pi i\right\} |\gamma|^{1/2-H}.
\]
To finish the proof we multiply the last term by $i$ in order to impose the reality condition
\[
(-\gamma-i)h(-\gamma)=(\gamma+i)\overline{h}(\gamma) \Longleftrightarrow \overline{h}(-\gamma)=-\overline{h}(\gamma).
\]
\end{proof}

The deterministic coefficients in the chaos expansion \eqref{eq:conditional_expansion} for the conditional expectation $\mathbb E\left[ B_H(t)| \mathscr F_{(-\infty,0]} \right]$ are given by
\[
\begin{split}
r^H_j(t) \triangleq \left(\mathbf 1_t,\xi_j\right)_\Delta & = \int_{-\infty}^\infty z_t(\gamma) \frac{\overline{e_j}(\gamma)}{h(\gamma)(\gamma-i)} (1+\gamma^2)|h(\gamma)|^2d\gamma \\
& =\frac{1}{\sqrt{\pi C_H}} \int_{-\infty}^\infty \frac{e^{i\gamma t}-1}{\gamma}   \left(\frac{1-i\gamma}{1+i\gamma}\right)^j \\
& \times \exp\left\{i \pi \frac{2H-1}{4}sign(\gamma)\right\}  |\gamma|^{H-0.5}  d\gamma.
\end{split}
\]
See Figure \ref{fig:Coeff} for a graphical illustration of these coefficients for a few cases of the Hurst parameter $H$.

\begin{figure}
        \begin{subfigure}[b]{0.3\textwidth}
                \centering
                \includegraphics[scale=0.3]{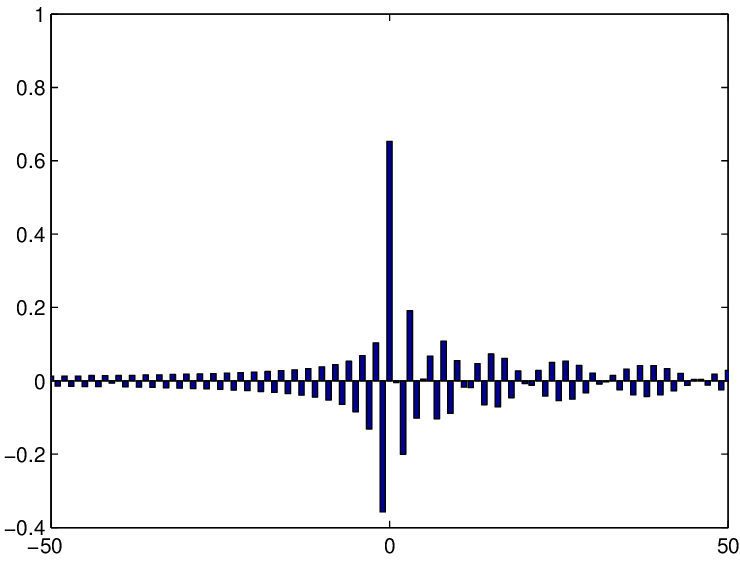}
                \caption{H=0.2}
        \end{subfigure}%
        \begin{subfigure}[b]{0.3\textwidth}
                \centering
                \includegraphics[scale=0.3]{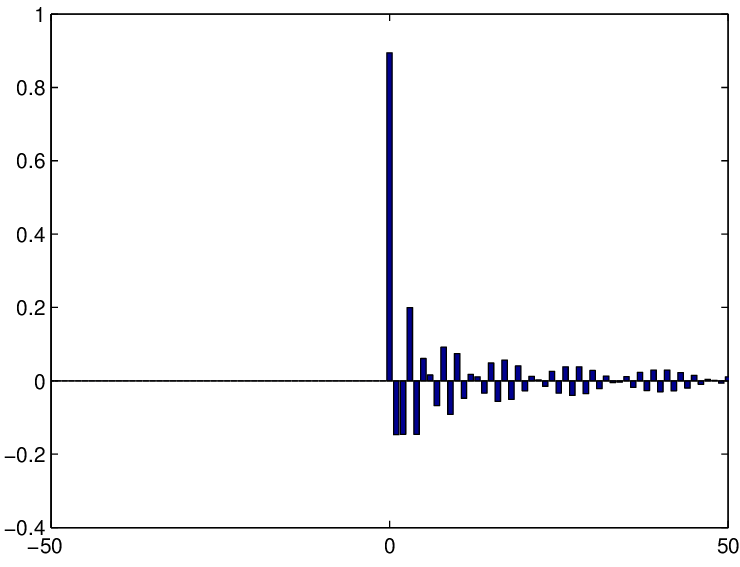}
                \caption{H=0.5}
        \end{subfigure}
        \begin{subfigure}[b]{0.3\textwidth}
                \centering
                \includegraphics[scale=0.3]{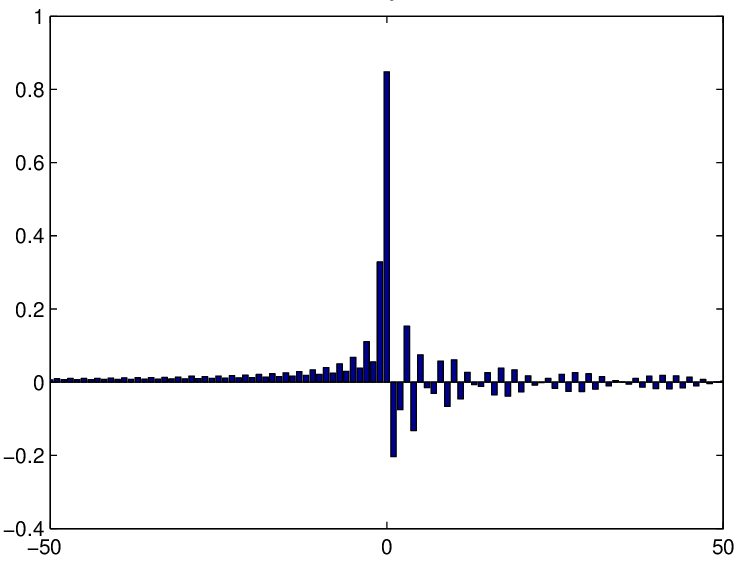}
                \caption{H=0.7}
        \end{subfigure}
        \caption{MAgnitudes of the coefficient $r^H_j(1)=\left( \mathbf 1_{[0,1]}, \xi_j \right)_\Delta$, $-100\leq j \leq 100$, in the sum \eqref{eq:PaleyWiener_fBm} for different values of the Hurst parameter $H$ at time $t=1$. $H=0.5$ corresponds to the Brownian motion.}  \label{fig:Coeff}
\end{figure}
This gives us
\begin{equation} \label{eq:PaleyWiener_fBm}
 B_H(t)=\sum_{j\in \mathbb Z} r^H_j(t) I(\xi_j),
 \end{equation}
which is a representation of the process $B_H(t)$ as a sum of mutually orthogonal Gaussian random variables independent of time, weighted by the coefficients $\left\{ r^H_j(t),\, j\in \mathbb Z\right\}$. 

It can be shown that the sum \eqref{eq:PaleyWiener_fBm} converges in $\mathbf L_2 (\Omega)$ uniformly in $t\in \mathbb R$. We now split the sum in \eqref{eq:PaleyWiener_fBm} 
into two sums, of elements of $j<0$ and $j\geq0$ respectively, which by \eqref{eq:conditional_expansion} corresponds to the sum of two (in general not orthogonal) processes: 
\begin{equation} \label{eq:fbm_past}
\sum_{j\leq -1}r_j^H(t) I(\xi_j) =   \mathbb E \left[B_H(t) |\mathscr F_{(-\infty,0]}\right],
\end{equation}
and 
\begin{equation} \label{eq:fbm_future}
\sum_{j\geq 0} r_j^H(t) I(\xi_j) = B_H(t) -  \mathbb E \left[B_H(t) | \mathscr F_{(-\infty,0]} \right].
\end{equation}
For $t<0$, $r_j^H=0$ for any $j\geq0$, and $B_H(t)$ coincides with \eqref{eq:fbm_past}. For $t>0$, the sum in \eqref{eq:fbm_past} represents what the past `thinks' the future looks like given a specific realization, i.e. the projection of the future on the past. The other part of the sum \eqref{eq:fbm_future} is the complementary projection which can only be determined by the future. Figure \ref{fig:sample_path} illustrates a realization of $B_H(\cdot)$ rendered according to the two sums in \eqref{eq:fbm_past} and \eqref{eq:fbm_future}. From \eqref{eq:fbm_future} we also obtain an expression for the prediction error in estimating $B_H(t)$ for $t>0$:
\begin{equation} \label{eq:error_fbm}
\mathbb E\left(B_H(t) -  \mathbb E \left[B_H(t)|\mathscr F_{(-\infty,0]} \right] \right)^2 = \sum_{j\geq 0} \left(r_j^H(t) \right)^2.
\end{equation}
A closed form expression for $\mathbb E\left(B_H(t) -  \mathbb E \left[B_H(t) |\mathscr F_{(-\infty,0]} \right] \right)^2$
was derived in \cite{Gripenberg96}, which leads to the identity
\begin{equation} \label{eq:fbm_error_exact}
\sum_{j\geq 0} \left(r_j^H(t) \right)^2 =  \frac{\sin\left(\pi(H-\frac{1}{2})\right) \Gamma\left(\frac{3}{2}-H\right)^2} {\pi\left(H-\frac{1}{2}\right) \Gamma \left(2-2H\right)}t^{2H}.
\end{equation}

\begin{figure} 
\begin{center}
  \includegraphics[scale=0.75, trim = 0.5cm 0cm 2cm 0cm, clip=true]{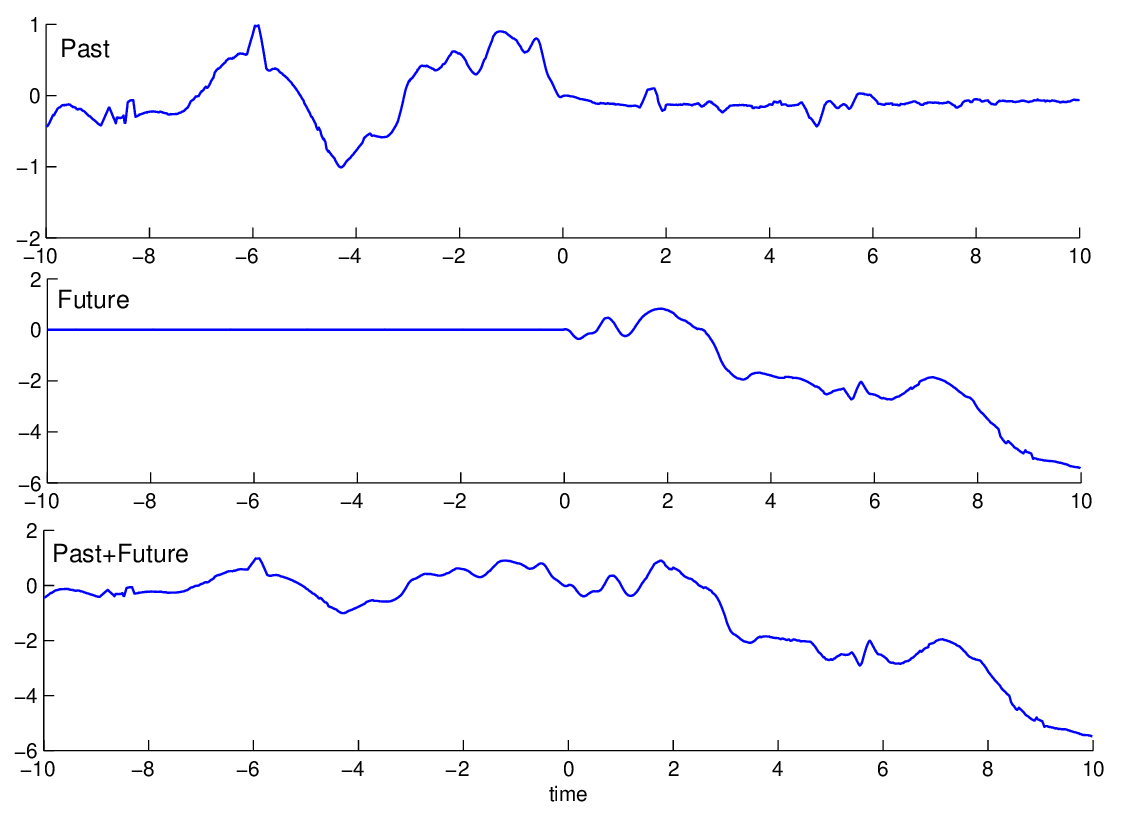}
  \caption{\label{fig:sample_path} An illustration of a single sample path of the fractional Brownian motion $B_H$ with $H=0.7$, rendered according to the two components of the sum \eqref{eq:PaleyWiener_fBm}: The `past' and `future' correspond to the negative and non-negative indexes, respectively. }
  \end{center}
\end{figure}

%

The significance of Theorem \ref{th:main2} is in simplifying expressions for the conditional expectation of non-linear functions of $B_H(t)$. For example, the chaos expansion of $B_H(t)^2$ is found by the Wick product identity \cite[Eq. 2.4.10]{new_sde}:
\begin{equation}
\begin{split}
B_H^2(t)- \left| t \right|^{2H}  & =  : B_H(t), B_H(t):  \\
& = \sum_{j} \left(r^H_j(t)\right)^2 h_2(E_j) + \sum_{i\neq j} r^H_i(t) r^H_j(t) h_1(E_i)h_1(E_j),
\end{split}
\end{equation}
where $i$ and $j$ go over all integers ,and we used the fact that \[
\sum_{j\in \mathbb Z} \left(r^H_j(t) \right)^2 = \mathrm{var}\left( B_H(t)\right) = \left| t \right|^{2H}.
\]
From Theorem~\ref{th:main2} we conclude
\begin{equation}
\begin{split} \label{eq:fbm_square}
\mathbb E \left[B_H^2(t)| \mathscr F_{(-\infty,0]} \right] & = \\ 
 \left| t \right|^{2H} +  \sum_{j\leq -1} & \left(r^H_j(t)\right)^2 h_2(E_j) + \sum_{i\neq j \leq -1} r^H_i(t) r^H_j(t) h_1(E_i)h_1(I(E_j)).
\end{split}
\end{equation}

\subsection{Prediction with respect to $A={[-T,T]}$ \label{sec:fbm_bounded}}
An orthonormal basis $\left\{\xi_n\,\,n\in \mathbb Z\right\}$ for the space $\mathbf Z_{[-T,T]}$ in the case of the fractional Brownian motion was obtained in \cite{MR2178502}. This basis is defined in terms of the zeros of $J_{1-H}$, which is the Bessel function of the first order with parameter $1-H$. Specifically
\begin{equation}
\xi_n (\gamma)=\frac{S_T(2\gamma_n/T,\gamma)}{\| S_T (2\gamma_n/T,\cdot)\|_{\Delta}},
\end{equation}
where $...<\gamma_{-1}<\gamma_0=0<\gamma_1<...$ are the zeros of $J_{1-H}$. In addition
\[
\begin{split}
S_T(2\eta,2\gamma)=S_T(0,0)(2-2H)\Gamma^2(1-H)\left(\frac{T^2\gamma\eta}{4} \right)e^{iT(\gamma-\eta)} \\
\times \frac{J_{-H}(T\eta)J_{1-H}(T\gamma)-J_{1-H}(T\eta)J_{-H}(T\gamma)}{T(\gamma-\eta)},
\end{split}
\]
for $\eta \neq \gamma$, and
\[
\begin{split}
S_T(2\gamma,2\gamma)=S_T(0,0)(2-2H)\Gamma^2(1-H)\left(\frac{T\gamma}{2} \right) \\
\times\left( J^2_{1-H}(T\gamma)+\frac{2H-1}{T\gamma}J_{-H}(T\gamma)J_{1-H}(T\gamma)+J^2_{-H}(T\gamma)\right),
\end{split}
\]
for $\gamma \in \mathbb R$.\\

\section{Concluding Remarks}
In this work we combined the Wiener chaos decomposition with the problem of linear prediction for Gaussian stationary-increment processes to derive the chaos decomposition of a general non-linear function of the process conditioned on past realizations of the process. This decomposition is obtained by considering a special basis for the Gaussian Hilbert space generated by the process, in which each basis element is either completely measurable with respect to the observations or independent of it. This special basis has the property that each basis element is either measurable or independent with the $\sigma$-field generated by the observations. The result is a chaos approach to prediction, which can be employed to easily derive an expression for the chaos expansion of any non-linear function of the processes with respect to past observations. \\

\bibliographystyle{plain}
\bibliography{chaos}

\def\cprime{$'$} \def\lfhook#1{\setbox0=\hbox{#1}{\ooalign{\hidewidth
  \lower1.5ex\hbox{'}\hidewidth\crcr\unhbox0}}} \def\cprime{$'$}
  \def\cprime{$'$} \def\cprime{$'$} \def\cprime{$'$} \def\cprime{$'$}
\begin{thebibliography}{10}

\bibitem{ad1}
D.~Alpay and H.~Dym.
\newblock Hilbert spaces of analytic functions, inverse scattering and operator
  models, {I}.
\newblock {\em Integral Equation and Operator Theory}, 7:589--641, 1984.

\bibitem{MR0027954}
Arne Beurling.
\newblock On two problems concerning linear transformations in {H}ilbert space.
\newblock {\em Acta Math.}, 81:17, 1948.

\bibitem{bosw}
F.~Biagini, B.~{\O}ksendal, A.~Sulem, and N.~Wallner.
\newblock An introduction to white-noise theory and {M}alliavin calculus for
  fractional {B}rownian motion, stochastic analysis with applications to
  mathematical finance.
\newblock {\em Proc. R. Soc. Lond. Ser. A Math. Phys. Eng. Sci.},
  460(2041):347--372, 2004.

\bibitem{dbhsaf1}
{L. de} Branges.
\newblock Some {Hilbert} spaces of analytic functions {I}.
\newblock {\em Trans. Amer. Math. Soc.}, 106:445--468, 1963.

\bibitem{briand2014simulation}
Philippe Briand, C{\'e}line Labart, et~al.
\newblock Simulation of bsdes by wiener chaos expansion.
\newblock {\em The Annals of Applied Probability}, 24(3):1129--1171, 2014.

\bibitem{MR0229011}
Louis de~Branges.
\newblock {\em Hilbert spaces of entire functions}.
\newblock Prentice-Hall Inc., Englewood Cliffs, N.J., 1968.

\bibitem{Ustunel}
L.~Decreusefond and A.S. üstünel.
\newblock Stochastic analysis of the fractional brownian motion.
\newblock {\em Potential Analysis}, 10(2):177--214, 1999.

\bibitem{doob1953stochastic}
J.L. Doob.
\newblock {\em Stochastic Processes}.
\newblock Wiley Publications in Statistics. John Wiley \& Sons, 1953.

\bibitem{duncan2009control}
TE~Duncan and B.~Pasik-Duncan.
\newblock Control of some linear stochastic systems with a fractional brownian
  motion.
\newblock In {\em Decision and Control, 2009 held jointly with the 2009 28th
  Chinese Control Conference. CDC/CCC 2009. Proceedings of the 48th IEEE
  Conference on}, pages 8518--8522. IEEE, 2009.

\bibitem{duren}
P.L. Duren.
\newblock {\em Theory of $H^p$ spaces}.
\newblock Academic press, New York, 1970.

\bibitem{dg-80}
H.~Dym and I.~Gohberg.
\newblock On an extension problem, generalized {F}ourier analysis, and an
  entropy formula.
\newblock {\em {Integral Equation and Operator Theory}}, 3:143--215, 1980.

\bibitem{Dmk}
H.~Dym and H.P. McKean.
\newblock {\em Gaussian processes, function theory and the inverse spectral
  problem}.
\newblock Academic {P}ress, 1976.

\bibitem{DVZ}
K.~Dzhaparidze and H.~{van Zanten}.
\newblock Krein's spectral theory and the {P}aley-{W}iener expansion for
  fractional {B}rownian motion.
\newblock {\em The Annals of Probability}, 33(4):620--644, 2005.

\bibitem{MR2178502}
Kacha Dzhaparidze, Harry van Zanten, and Pawel Zareba.
\newblock Representations of fractional {B}rownian motion using vibrating
  strings.
\newblock {\em Stochastic Process. Appl.}, 115(12):1928--1953, 2005.

\bibitem{Gripenberg96}
Gustaf Gripenberg and Ilkka Norros.
\newblock On the prediction of fractional brownian motion.
\newblock {\em Journal of Applied Probability}, 33(2):400--410, 1996.

\bibitem{HidaKuo}
T.~Hida, H.~Kuo, J.~Potthoff, and L.~Streit.
\newblock {\em White noise}, volume 253 of {\em Mathematics and its
  Applications}.
\newblock Kluwer Academic Publishers Group, Dordrecht, 1993.
\newblock An infinite-dimensional calculus.

\bibitem{Holden}
H.~Holden, B.~{\O}ksendal, J.~Ub{\o}e, and T.~Zhang.
\newblock {\em Stochastic partial differential equations}.
\newblock Probability and its Applications. Birkh\"auser Boston Inc., Boston,
  MA, 1996.

\bibitem{new_sde}
Helge Holden, Bernt {\O}ksendal, Jan Ub{\o}e, and Tusheng Zhang.
\newblock {\em Stochastic partial differential equations}.
\newblock Universitext. Springer, New York, second edition, 2010.
\newblock A modeling, white noise functional approach.

\bibitem{Hou06wienerchaos}
Thomas~Y. Hou, Wuan Luo, Boris Rozovskii, and Hao min Zhou.
\newblock Wiener chaos expansions and numerical solutions of randomly forced
  equations of fluid mechanics.
\newblock {\em J. Comput. Phys}, 216:687--706, 2006.

\bibitem{Howland1979}
James~S. Howland.
\newblock Dilations and mehler's kernel.
\newblock {\em Integral Equations and Operator Theory}, 2/1(8), 1979.

\bibitem{MR99f:60082}
S.~Janson.
\newblock {\em Gaussian {H}ilbert spaces}, volume 129 of {\em Cambridge Tracts
  in Mathematics}.
\newblock Cambridge University Press, Cambridge, 1997.

\bibitem{Kah00}
Jean-Pierre Kahane.
\newblock Multiplicative chaos and multimeasures.
\newblock In {\em Complex analysis, operators, and related topics}, volume 113
  of {\em Oper. Theory Adv. Appl.}, pages 115--126. Birkh\"auser, Basel, 2000.

\bibitem{karhunen1950struktur}
Kari Karhunen.
\newblock {\"U}ber die struktur station{\"a}rer zuf{\"a}lliger funktionen.
\newblock {\em Arkiv f{\"o}r Matematik}, 1(2):141--160, 1950.

\bibitem{lototsky2006stochastic}
Sergey Lototsky and Boris Rozovskii.
\newblock Stochastic differential equations: a wiener chaos approach.
\newblock In {\em From stochastic calculus to mathematical finance}, pages
  433--506. Springer, 2006.

\bibitem{Molchan}
G.M. Molchan and Y.I. Golosov.
\newblock Gaussian stationary processes with asymptotically power spectrum.
\newblock {\em Doklady Akad. Nauk SSSR}, 184:546--549, 1969.

\bibitem{MR1790083}
V.~Pipiras and M.S. Taqqu.
\newblock Integration questions related to fractional {B}rownian motion.
\newblock {\em Probab. Theory Related Fields}, 118(2):251--291, 2000.

\end{thebibliography}

\end{document}